\newtheorem{thm}{Theorem}[section]
\newtheorem{lem}{Lemma}[section]
\newtheorem{prop}{Proposition}[section]
\theoremstyle{definition}
\newtheorem{defn}{Definition}[section]
\theoremstyle{remark}
\newtheorem{rem}{Remark}[section]
\numberwithin{equation}{section}
\newcommand{\RR}{\mathbb{R}}
\newcommand{\Sphere}{\mathbb{S}}
\newcommand{\curl}{{\rm curl} \,}
\title[Recovering polyhedral PEC obstacles]{\bf Recovering an electromagnetic obstacle by a few phaseless backscattering measurements}
\author{Jingzhi Li}
\address{Faculty of Science, South University of Science and
Technology of China, 518055 Shenzhen, P.~R.~China.}
\email{li.jz@sustc.edu.cn}
\author{Hongyu Liu}
\address{Department of Mathematics, Hong Kong Baptist University,Kowloon Tong, Hong Kong SAR.\vspace*{-4mm}}
\address{\vspace*{-4mm}and}
\address{HKBU Institute of Research and Continuing Education, Virtual University Park, Shenzhen, P. R. China.}
\email{hongyu.liuip@gmail.com}
\author{Yuliang Wang}
\address{Department of Mathematics, Hong Kong Baptist University,Kowloon Tong, Hong Kong SAR.}
\email{yuliang@hkbu.edu.hk}
\date{}
\begin{document}
\maketitle

\begin{abstract}

We consider the electromagnetic scattering from a convex polyhedral PEC or PMC obstacle due to a time-harmonic incident plane wave. It is shown that the modulus of the far-field pattern in the backscattering aperture possesses a certain local maximum behavior. Using the local maximum indicating phenomena, one can determine the exterior unit normal directions, as well as the face areas, of the front faces of the obstacle. Then we propose a recovery scheme of reconstructing the obstacle by phaseless backscattering measurements. This work significantly extends our recent study in \cite{LL} from two dimensions and acoustic scattering to the much more challenging three dimensions and electromagnetic scattering.

\medskip

\noindent{\bf Keywords}. Inverse scattering, perfectly conducting obstacle, polyhedral, phaseless, backscattering \smallskip

\noindent{\bf Mathematics Subject Classification (2010)}:  Primary 78A46, 35R30; Secondary 78A40, 35Q60

\end{abstract}

\section{Introduction}

In this paper, we shall be concerned with the inverse scattering problem of recovering an anomalous obstacle located in a homogeneous space by the corresponding electromagnetic (EM) wave probing. In doing so, one sends a certain electromagnetic wave field, and the wave propagation will be interrupted/perturbed when meeting with the obstacle. The perturbation is the so-called scattering, and one intends to recover the obstacle by measuring the scattered wave away from the obstacle. The inverse scattering problem is of fundamental importance to many areas of science and technology, including radar/sonar, geophysical exploration, medical imaging, as well as remote sensing; see \cite{AK1,AK2,CK,Isa,Uhl} and the references therein.

Let $\epsilon$ and $\mu$ denote, respectively, the electric permittivity and magnetic permeability of the isotropic homogeneous medium in $\mathbb{R}^3$. Let $D\subset\mathbb{R}^3$ be a bounded Lipschitz domain such that $\mathbb{R}^3 \setminus \overline{D}$ is connected. Here $D$ represents the target obstacle located in the homogeneous space. The electromagnetic wave is descried by the electric field $\mathscr{E}(\bm{x}, t)$ and the magnetic field $\mathscr{H}(\bm{x},t)$ for $(\bm{x},t)\in\mathbb{R}^3\times\mathbb{R}_+$. The electromagnetic wave propagation is governed by the Maxwell equations
\begin{equation}\label{eq:Maxwell1}
\curl \mathscr{E}(\bm{x},t)+\mu\frac{\partial\mathscr{H}}{\partial t}(\bm{x},t)=0,\quad
\curl \mathscr{H}(\bm{x},t)-\epsilon\frac{\partial \mathscr{E}}{\partial t}(\bm{x},t)=0.
\end{equation}
For time-harmonic electromagnetic waves of the form
\[
\mathscr{E}(\bm{x},t)=\Re(\epsilon^{-1/2}\bm{E}(\bm{x})e^{-i\omega t}),\quad \mathscr{H}(\bm{x},t)=\Re(\mu^{-1/2}\bm{H}(\bm{x})e^{-i\omega t})
\]
with frequency $\omega\in\mathbb{R}_+$, it is directly verified that one has the reduced Maxwell equations
\begin{equation}\label{eq:Maxwell2}
  \curl \bm{E}(\bm{x})-i k \bm{H}(\bm{x})=0,\quad 
  \curl \bm{H}(\bm{x})+ik \bm{E}(\bm{x})=0,
\end{equation}
where $k=\omega \sqrt{\epsilon\mu}\in\mathbb{R}_+$ denotes the wavenumber. The EM wave field cannot penetrate inside the obstacle $D$, and hence the Maxwell system \eqref{eq:Maxwell2} is defined only in $\mathbb{R}^3\setminus \overline{D}$, and on the boundary $\partial D$, one has
\begin{equation}\label{eq:PEC}
\bm{\nu}(\bm{x})\times \bm{E}(\bm{x})=0\quad\mbox{or}\quad \bm{\nu}(\bm{x})\times \bm{H}(\bm{x})=0,\quad \bm{x}\in\partial D,
\end{equation}
respectively, corresponding to a perfectly electric conducting (PEC) obstacle or a perfectly magnetic conducting (PMC) obstacle. Here, $\bm{\nu}\in\Sphere^2:=\{\bm{x}\in\mathbb{R}^3; |\bm{x}|=1\}$
denote the exterior unit normal vector to $\partial D$. The total wave fields $(\bm{E}(\bm{x}), \bm{H}(\bm{x}))$ are given as
\begin{equation}\label{eq:total}
\bm{E}(\bm{x})=\bm{E}^i(\bm{x})+\bm{E}^s(\bm{x}),\quad \bm{H}(\bm{x})=\bm{H}^i(\bm{x})+\bm{H}^s(\bm{x}),\quad \bm{x}\in\mathbb{R}^3\setminus\overline{D},
\end{equation}
where for the present study, we take
\begin{equation}\label{eq:planewave}
  \bm{E}^i = e^{ik\bm{x}\cdot \bm{d}} \bm{p},\quad 
  \bm{H}^i(\bm{x}) = e^{ik\bm{x}\cdot \bm{d}} (\bm{d} \times \bm{p}).
\end{equation}
Here $(\bm{E}^i, \bm{H}^i)$ in \eqref{eq:planewave} are known as the normalized electromagnetic plane wave with the polarization vector $\bm{p}\in\Sphere^2$ and incident direction $\bm{d}\in\Sphere^2$ satisfying $ \bm{d} \cdot \bm{p} = 0$, while $\bm{E}^s$, $\bm{H}^s$ in \eqref{eq:total} are known as the scattered electric and magnetic fields, respectively, and they are required to satisfy the Silver-M\"uller radiation condition as follows,
\begin{equation}\label{eq:radiation}
\lim_{|\bm{x}|\rightarrow+\infty}(\bm{H}^s\times \bm{x}-|\bm{x}| \bm{E}^s)=0,
\end{equation}
which holds uniformly for all directions $\hat{\bm{x}}:=\bm{x}/|\bm{x}|$, $\bm{x}\in\mathbb{R}^3$ and $\bm{x}\neq 0$. The Maxwell system \eqref{eq:Maxwell2}--\eqref{eq:radiation} is well understood and there exists a unique pair of solutions $(\bm{E}, \bm{H})\in H_{\rm loc}(\mbox{curl}; \mathbb{R}^3\setminus\overline{D})\times H_{\rm loc}(\mbox{curl}; \mathbb{R}^3\setminus\overline{D})$ (cf. \cite{CK,Ned}) such that as $|\bm{x}|\rightarrow +\infty$,
\begin{equation}\label{eq:farfield}
\bm{E}(\bm{x})=\frac{e^{ik|\bm{x}|}}{|\bm{x}|} \bm{E}^\infty(\hat{\bm{x}})+\mathcal{O}\left(\frac{1}{|\bm{x}|^2}\right),\quad \bm{H}(\bm{x})=\frac{e^{ik|\bm{x}|}}{|\bm{x}|} \bm{H}^\infty(\hat{\bm{x}})+\mathcal{O}\left(\frac{1}{|\bm{x}|^2}\right),
\end{equation}
which hold uniformly for all directions $\hat{\bm{x}}$. Here $\bm{E}^\infty$ and $\bm{H}^\infty$ are known as the electric and magnetic far-field patterns, respectively, and they satisfy
\begin{equation}\label{eq:relation}
\bm{H}^\infty=\hat{\bm{x}}\times \bm{E}^\infty\quad \mbox{and}\quad \hat{\bm{x}}\cdot \bm{E}^\infty=\hat{\bm{x}}\cdot \bm{H}^\infty=0.
\end{equation}
In what follows, we shall write $\bm{E}^\infty(\hat{\bm{x}}; \bm{p}, k, \bm{d}, D)$ to specify its dependence on the observation direction $\hat{\bm{x}}$, polarization $\bm{p}$, wavenumber $k$, incident direction $\bm{d}$ and the obstacle $D$.

The inverse scattering problem that we are concerned with is to recover $D$ by the knowledge of $\bm{E}^\infty(\hat{\bm{x}}; \bm{p}, k, \bm{d}, D)$. The inverse problem is widely known to be nonlinear and ill-posed (cf. \cite{CK}). There is a longstanding problem in the literature on whether and how one can recover the obstacle $D$ by using a single far-field measurement; that is, $\bm{E}^\infty(\hat{\bm{x}}; \bm{p}, k, \bm{d})$ given for all $\hat{\bm{x}}\in\Sphere^2$ but fixed $\bm{p}, k$ and $\bm{d}$ (see \cite{CK,Isa,Uhl}). Physically speaking, a single far-field measurement is obtained by sending a single incident plane wave and then collecting the electric far-field data in every observation direction. It is remarked that $\bm{E}^\infty$ (respectively, $\bm{H}^\infty$) is a real-analytic function on $\Sphere^2$, and hence if it is known on any open patch of the unit sphere, then it is known on the whole sphere by the analytic continuation (cf. \cite{CK}). It is easily seen that the inverse problem is formally posed with a single far-field measurement. Hence, there is a widespread belief that one can establish the recovery by a single far-field measurement, though it still remains to be a very challenging issue. We refer to \cite{KR,LLSS,LLW,Liu,LYZ} for some theoretical and computational progress on the investigation of the recovery for the inverse electromagnetic scattering problem by making use of as few measurement data as possible. Another extremely challenging issue for the inverse scattering problem is about the recovery by the phaseless data, say the modulus of the electric far-field pattern, $|\bm{E}^\infty(\hat{\bm{x}})|$. To our best knowledge, there is very little progress in the literature on the phaseless recovery for the inverse electromagnetic scattering problem described above.

In our recent work \cite{LL}, a novel scheme was developed for the reconstruction of a polyhedral obstacle by a few acoustic backscattering measurements. The scheme is based on the high-frequency asymptotics of the acoustic wave scattering, namely the Kirchhoff or the physical optics approximation. Using the high-frequency asymptotic approximation, it is shown in \cite{LL} that the modulus of the acoustic far-field pattern in the backscattering aperture possesses a certain local maximum behavior, from which one can determine the exterior normal directions of the front faces of the obstacle. Then by a few backscattering measurements corresponding to several properly chosen incident plane waves, one can determine the exterior normal directions of the faces of the obstacle. After the determination of the exterior face normals, the recovery of the whole obstacle is reduced into a finite dimensional algebraic problem, which can be easily solved. In this work, we shall significant extend the study \cite{LL} in two aspects. First, the study in \cite{LL} is to recover a polygon in the 2D plane, whereas in this study we shall recover a polyhedron in the 3D space. As we shall see, this will create much more difficulties in both theoretical and computational aspects. Second, the study in \cite{LL} mainly concerns the acoustic scattering governed by the scalar Helmholtz system, whereas in the present paper, we shall be concerned with the much more complicated vectorial Maxwell system. The proposed scheme for the recovery of an electromagnetic obstacle follows a similar spirit to \cite{LL} by using the high-frequency asymptotics of the electromagnetic waves as well as the local maximum behavior of the backscattering far-field pattern. However, we would like to emphasize that the extension is highly nontrivial and technical.

The rest of the paper is organized as follows. In Section 2, using the physical optics approximation, we prove the local maximum behavior of the modulus of the electric far-field pattern in the backscattering aperture. In Section 3, we present the recovery scheme. Section 4 is devoted to numerical examples, which illustrate the effectiveness of the proposed recovery scheme. Concluding remarks are given in section 5.

\section{Local Maximum Behavior}

In this section, we consider the local maximum behavior of the modulus of the electric far-field pattern $|\bm{E}^\infty(\hat x)|$ corresponding to a polyhedral obstacle $D$. It is first noted that due the symmetric role of the electric field $\bm{E}$ and the magnetic field $\bm{H}$, we would consider the scattering from a PEC obstacle only. Indeed, it is easily verified by letting $\widetilde{\bm{E}}=-\bm{H}$ and $\widetilde{\bm{H}}=\bm{E}$ that $\curl \widetilde{\bm{E}}-ik\widetilde{\bm{H}}=0$ and $\curl \widetilde{\bm{H}}+ik\widetilde{\bm{E}}=0$. Hence, if $D$ is a PMC obstacle with respect to $(\bm{E}, \bm{H})$, then it is a PEC obstacle with respect to $(\widetilde{\bm{E}},\widetilde{\bm{H}})$. Therefore, we focus on the PEC case in what follows and all our subsequent results derived for the scattering from a PEC obstacle equally hold for the scattering from a PMC obstacle.

Throughout the rest of this section, we let $\bm{p}\in\mathbb{R}^3$, $k\in\mathbb{R}_+$ and $\bm{d}\in\Sphere^{2}$ be fixed.
Let $D$ be a convex polyhedron in $\mathbb{R}^3$, such that
\begin{equation}\label{eq:boundary}
\partial D=\bigcup_{j=1}^m C_j,
\end{equation}
where each $C_j$ represents an open face of $\partial D$. Let $\nu(\bm{x})\in\Sphere^{2}$, $\bm{x}\in \partial D$ denote the unit normal vector to $\partial D$ pointing to the exterior of $D$, and we set
\begin{equation}\label{eq:normal}
\bm{\nu}_j:=\bm{\nu}(\bm{x})\ \ \mbox{when}\ \ \bm{x}\in C_j,\ \ j=1,2,\ldots, m.
\end{equation}
Obviously, $\bm{\nu}_j$ is a constant unit vector. Define
\[
\partial D^{+}:=\{\bm{x}\in\partial D:\ \nu(\bm{x})\cdot \bm{d}\geq 0\}\quad\mbox{and}\quad \partial D^{-}:=\{\bm{x}\in\partial D: \ \nu(\bm{x})\cdot \bm{d}< 0\}
\]
to be, respectively, the back-view and front-view of $\partial D$ with respect to the incident direction $\bm{d}$. A face lying in the front-view (resp. back-view) of $\partial D$ will be referred to as a front-face (resp. back-face) with respect to the incident direction $\bm{d}$. Henceforth, $D$ shall be referred to as a polyhedral obstacle.

Let $h_j>0$, $j=0,\cdots,4$ be five fixed a priori constants. A polyhedral obstacle $D$ is said to be {\em admissible} if there hold
  \begin{numcases}{}
   h_0 \leq |D| \leq h_1, \label{eq:cond1} \\
   \displaystyle{\min_{1 \leq \alpha,\alpha' \leq \beta, \alpha \neq \alpha'} |\bm{\nu}_\alpha \times \bm{\nu}_{\alpha'} | \geq h_2}\ \mbox{for each}\ \partial D^- = \bigcup_{\alpha=1}^\beta C_\alpha, \label{eq:cond2} \\
   \displaystyle{\min_{1\leq j\leq m} |C_j| \geq h_3}, \label{eq:cond3} \\
   \displaystyle{\max_{1\leq j\leq m} |\partial C_j| \leq h_4}. \label{eq:cond4}
  \end{numcases}
In \eqref{eq:cond1}, \eqref{eq:cond3} and \eqref{eq:cond4} we denote by $|\Omega|$ the volume, the area and the perimeter of $\Omega$ respetively. Condition \eqref{eq:cond1} means that the polyhedron is of regular size (with respect to the wavelength). Generically speaking, condition \eqref{eq:cond2} excludes the case when two faces in a back-view of $D$ are nearly parallel to each other. Condition \eqref{eq:cond3} means no face of $D$ can be too small and condition \eqref{eq:cond4} means the faces of $D$ are mildly ``round''. It is worth to point out that only the first three conditions are required in the two-dimensional case.

Let $h_5>0$ be another fixed a priori constant. A face $C_j \in \partial D^-$ is said to be {\em significant} with respect to the incident direction $\bm{d}$ if
\begin{align}
  |\bm{d} \cdot \bm{\nu}_j| \geq h_5. \label{eq:cond5}
\end{align}
Intuitively speaking, condition \eqref{eq:cond5} means $C_j$ is not too parallel to $\bm{d}$ so that the scattered field contributed by $C_j$ is significant. 

For the subsequent use, we let
\begin{equation}\label{eq:fundamental}
\Phi(\bm{x},\bm{y}):=\frac{1}{4\pi}\frac{e^{ik|\bm{x}-\bm{y}|}}{|\bm{x}-\bm{y}|},\quad \bm{x}, \bm{y}\in\mathbb{R}^3,\ \ \bm{x}\neq \bm{y},
\end{equation}
which satisfies $-(\Delta_{\bm{x}}+k^2)\Phi(\bm{x},\bm{y})=\delta(\bm{x}-\bm{y})$.
Let $\psi(\bm{x})\in \mathcal{C}(\partial D)^3$, $\bm{x}\in\partial D$, and define
\begin{equation}\label{eq:vp}
\mathscr{V}(\bm{x}):=\int_{\partial D} \Phi(\bm{x},\bm{y}) \psi(\bm{y})\, {\rm d}s_{\bm{y}},\quad \bm{x} \in\mathbb{R}^3 \setminus \partial D.
\end{equation}
Here $\mathscr{V}$ is called a vector potential with the density $\psi$. The following jump relation is known (cf. \cite{CK,Ned}),
\begin{equation}\label{eq:jump}
\bm{\nu}(\bm{x}) \times \curl \mathscr{V}^\pm(\bm{x}) = \bm{\nu}(\bm{x}) \times \curl \int_{\partial D} \Phi(\bm{x},\bm{y}) \psi(\bm{y}) \, {\rm d}s_{\bm{y}} \pm \frac 1 2 \psi(\bm{x})
\end{equation}
for $\bm{x}\in\partial D$ where $\bm{\nu}(\bm{x})\times \curl \mathscr{V}^\pm(\bm{x})$ denotes the limit of $\bm{\nu}(\bm{x})\times \curl \mathscr{V}(\bm{x})$ as $\bm{x}$ approaches $\partial D$, respectively, from the inside and outside of $D$, and the boundary integral is understood as an improper integral.

It is noted that both $\bm{E}$ and $\bm{H}$ to the Maxwell equations \eqref{eq:Maxwell2} satisfy the vectorial Helmholtz equation, i.e. 
\begin{align*}
  (\Delta+k^2) \bm{E}=(\Delta+k^2) \bm{H}=0.
\end{align*}

By the local boundary regularity estimate (cf. \cite{Mcl}), we know that both the total wave fields $\bm{E}$ and $\bm{H}$ to the scattering problem \eqref{eq:Maxwell2}--\eqref{eq:radiation} are continuous up to the boundary.

The following lemma on the representation of the EM wave fields can be found in \cite[Theorem 6.22]{CK}.

\begin{lem}\label{lem:rep}
For EM wave fields of the scattering problem \eqref{eq:Maxwell2}--\eqref{eq:radiation} due to a PEC obstacle $D$, we have
\begin{equation}\label{eq:rep1}
\begin{split}
\bm{E}(\bm{x}) &= \bm{E}^i(\bm{x})-\frac{1}{ik} \curl \curl \int_{\partial D} \Phi(\bm{x}, \bm{y}) \left[ \bm{\nu}(\bm{y}) \times \bm{H}(\bm{y}) \right] \, {\rm d}s_{\bm{y}}, \\[1ex]
\bm{H}(\bm{x}) &= \bm{H}^i(\bm{x})+ \curl \int_{\partial D} \Phi(\bm{x},\bm{y}) \left[ \bm{\nu}(\bm{y})\times \bm{H}(\bm{y}) \right] \, {\rm d}s_{\bm{y}},
\end{split}
\end{equation}
for $\bm{x}\in\mathbb{R}^3\setminus\overline{D}$. The corresponding far-field patterns are given by
\begin{equation}\label{eq:repfar}
\begin{split}
\bm{E}^\infty(\hat{\bm{x}})&= \frac{ik}{4\pi}\hat{\bm{x}}\times\int_{\partial D} e^{-ik\hat{\bm{x}}\cdot \bm{y}} \left[ \bm{\nu}(\bm{y})\times \bm{H}(\bm{y})\times\hat{\bm{x}} \right] \, {\rm d}s_{\bm{y}}, \\[1ex]
\bm{H}^\infty(\hat{\bm{x}})&= \frac{ik}{4\pi}\hat{\bm{x}}\times\int_{\partial D} e^{-ik\hat{\bm{x}}\cdot \bm{y}} \left[ \bm{\nu}(\bm{y})\times \bm{H}(\bm{y}) \right] \, {\rm d}s_{\bm{y}},
\end{split}
\end{equation}
for $\hat{\bm{x}}\in\Sphere^2$.
\end{lem}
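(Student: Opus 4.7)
The plan is to derive both representations as an application of the Stratton--Chu integral formula, tailored to the PEC boundary condition. Since the statement is attributed to \cite[Theorem 6.22]{CK}, the role of the proposal is to outline how one recovers the specific form of \eqref{eq:rep1} and then passes to the far field \eqref{eq:repfar}.

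First I would write down the Stratton--Chu representation for a radiating solution $(\bm{E}^s,\bm{H}^s)$ of the Maxwell system in $\mathbb{R}^3\setminus\overline{D}$, namely
\[
\bm{H}^s(\bm{x})= -\curl\int_{\partial D}\Phi(\bm{x},\bm{y})[\bm{\nu}(\bm{y})\times\bm{H}^s(\bm{y})]\,{\rm d}s_{\bm y} + \grad\int_{\partial D}\Phi(\bm{x},\bm{y})[\bm{\nu}(\bm{y})\cdot\bm{H}^s(\bm{y})]\,{\rm d}s_{\bm y} - ik\int_{\partial D}\Phi(\bm{x},\bm{y})[\bm{\nu}(\bm{y})\times\bm{E}^s(\bm{y})]\,{\rm d}s_{\bm y},
\]
for $\bm{x}\in\mathbb{R}^3\setminus\overline{D}$. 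The PEC condition gives $\bm{\nu}\times\bm{E}=0$ on $\partial D$, hence $\bm{\nu}\times\bm{E}^s = -\bm{\nu}\times\bm{E}^i$, and since $(\bm{E}^i,\bm{H}^i)$ satisfies the same Maxwell system in all of $\mathbb{R}^3$, the corresponding Stratton--Chu identity cancels the $\bm{E}^s$ term against $\bm{H}^i$. Moreover, $\curl\bm{H}=-ik\bm{E}$ together with $\bm{\nu}\cdot\curl\bm{H}=\mbox{div}_{\partial D}(\bm{\nu}\times\bm{H})$-type identities let me absorb the gradient term, leaving only the single-layer piece with density $\bm{\nu}\times\bm{H}$. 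This yields the second equation in \eqref{eq:rep1}. The electric-field identity then follows directly from $\bm{E}=(ik)^{-1}\curl\bm{H}$, moving the outer $\curl$ inside the integral and applying it once more to obtain the $\curl\curl$ expression.

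Next I would pass to the far field. Using the classical asymptotics
\[
\Phi(\bm{x},\bm{y})=\frac{e^{ik|\bm{x}|}}{4\pi|\bm{x}|}\Bigl(e^{-ik\hat{\bm{x}}\cdot\bm{y}}+\mathcal{O}(|\bm{x}|^{-1})\Bigr),\qquad \grad_{\bm{x}}\Phi(\bm{x},\bm{y})= ik\hat{\bm{x}}\,\Phi(\bm{x},\bm{y})+\mathcal{O}(|\bm{x}|^{-2}),
\]
uniformly in $\bm{y}\in\partial D$, one computes $\curl_{\bm{x}}[\Phi\,\bm{a}]=\grad_{\bm{x}}\Phi\times\bm{a}$ for a constant vector $\bm{a}$, so the leading term in the expansion of the $\curl$ in the magnetic representation produces the factor $ik\,\hat{\bm{x}}\times(\bm{\nu}\times\bm{H})$, giving the second identity in \eqref{eq:repfar}. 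For the electric representation, a second application of the curl yields the vector triple product $\hat{\bm{x}}\times[(\bm{\nu}\times\bm{H})\times\hat{\bm{x}}]$, and the prefactor $-1/(ik)$ combines with the two factors of $ik$ to leave $ik/(4\pi)$, producing the first identity in \eqref{eq:repfar}. The tangentiality $\hat{\bm{x}}\cdot\bm{E}^\infty=\hat{\bm{x}}\cdot\bm{H}^\infty=0$ is automatic from the form of these expressions.

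The main obstacle, and the only genuinely delicate point, is justifying the Stratton--Chu formula itself in the present regularity framework: $\partial D$ is only Lipschitz (in fact polyhedral), so the tangential trace $\bm{\nu}\times\bm{H}\in H^{-1/2}(\mbox{div}_{\partial D};\partial D)$ must be interpreted in the duality sense, and the jump relation \eqref{eq:jump} used to pair boundary and volume contributions must be invoked in that weak form. Continuity of $\bm{E}$ and $\bm{H}$ up to $\partial D$ (cited from \cite{Mcl}) allows one to treat the surface integrals as genuine improper integrals in the end. Once this functional-analytic setup is in place, the remaining computations are the routine vector-calculus manipulations sketched above.
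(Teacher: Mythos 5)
The paper offers no proof of this lemma---it is quoted verbatim from \cite[Theorem 6.22]{CK} (the Huygens principle for a perfect conductor)---and your proposal is essentially a correct reconstruction of the standard argument behind that citation: combine the exterior Stratton--Chu representation of the radiating scattered field with the interior Stratton--Chu identity for the entire incident field (which vanishes at exterior points), use the PEC condition to reduce the densities to $\bm{\nu}\times\bm{H}$ alone, and extract the far field from the asymptotics of $\Phi$ and $\grad_{\bm{x}}\Phi$; the far-field bookkeeping with the two factors of $ik$ and the triple product is right. Two sketch-level imprecisions are worth tightening if you write this out: the cancellation is not of ``the $\bm{E}^s$ term against $\bm{H}^i$'' but of the combined tangential trace $\bm{\nu}\times(\bm{E}^i+\bm{E}^s)=\bm{\nu}\times\bm{E}=0$, while the gradient term disappears because $\bm{\nu}\cdot\bm{H}=\tfrac{1}{ik}\,\mathrm{Div}(\bm{\nu}\times\bm{E})=0$ on a PEC boundary (your stated identity $\bm{\nu}\cdot\curl\bm{H}=\mathrm{Div}(\bm{\nu}\times\bm{H})$ controls $\bm{\nu}\cdot\bm{E}$, which is what you need for the electric representation, not the magnetic one); and the signs in your quoted Stratton--Chu formula correspond to the interior rather than the exterior orientation of $\bm{\nu}$. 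Your closing caveat about the Lipschitz/polyhedral boundary is the genuinely delicate point, and it is exactly what the paper addresses by invoking \cite{Mcl} for continuity of the fields up to $\partial D$.
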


Using the second equation in \eqref{eq:rep1}, we have
\begin{equation}\label{eq:pp1}
\begin{split}
\bm{\nu}_j\times \bm{H}(\bm{x})&= \bm{\nu}_j\times \bm{H}^i(\bm{x})+\bm{\nu}_j\times \curl \int_{\partial D} \Phi(\bm{x},\bm{y}) \left[ \bm{\nu}(\bm{y})\times \bm{H}(\bm{y}) \right] \, {\rm d}s_{\bm{y}} \\[1ex]
&= \bm{\nu}_j\times \bm{H}^i(\bm{x})+\bm{\nu}_j\times \curl \sum_{l=1}^m\int_{{C}_l} \Phi(\bm{x}, \bm{y}) \left[ \bm{\nu}_l\times \bm{H}(\bm{y}) \right] \, {\rm d}s_{\bm{y}},
\end{split}
\end{equation}
for $\bm{x} \in \mathbb{R}^3 \setminus \overline{D}$ and $j=1,2,\ldots, m$. Set
\begin{equation}\label{eq:pp2}
\bm{H}_j(\bm{x})=\bm{H}(\bm{x})\quad\mbox{and}\quad \bm{H}_j^i(\bm{x})=\bm{H}^i(\bm{x})\quad \mbox{for}\ \ \bm{x}\in {C}_j,\ \ j=1,2,\ldots,m.
\end{equation}
By letting $\bm{x} \rightarrow \partial D^+$ in \eqref{eq:pp1}, and using the jump relation \eqref{eq:jump}, we have
\begin{equation}\label{eq:pp3}
\frac 1 2 \bm{\nu}_j\times \bm{H}_j(\bm{x})=\bm{\nu}_j\times \bm{H}_j^i(\bm{x})+\bm{\nu}_j\times \curl \sum_{l=1}^m\int_{C_l} \Phi(\bm{x}, \bm{y}) \left[ \bm{\nu}_l\times \bm{H}_l(\bm{y}) \right] \, {\rm d}s_{\bm{y}}
\end{equation}
for $\bm{x}\in C_j$ and $j=1,2,\ldots, m$. Noting that
\begin{equation}\label{eq:pp5}
\begin{split}
& \bm{\nu}_j\times  \curl \int_{C_j} \Phi(\bm{x},\bm{y}) \left[ \bm{\nu}_j\times \bm{H}_j(\bm{y}) \right] \, {\rm d}s_{\bm{y}}\\
&= - \bm{\nu}_j \times \int_{C_j} \left[ \bm{\nu}_j\times \bm{H}_j(\bm{y}) \right] \times {\rm grad}_{\bm{x}} \, \Phi(\bm{x}, \bm{y})\, {\rm d}s_{\bm{y}}=0
\end{split}
\end{equation}
we can rewrite \eqref{eq:pp3} as
\begin{equation}\label{eq:pp6}
\frac 1 2 \bm{\nu}_j\times \bm{H}_j(\bm{x})=\bm{\nu}_j\times \bm{H}_j^i(\bm{x})+\sum_{l=1, l\neq j}^m\bm{\nu}_j\times \curl \int_{C_l}  \Phi(\bm{x}, \bm{y}) \left[ \bm{\nu}_l\times \bm{H}_l(\bm{y}) \right] \, {\rm d}s_{\bm{y}}
\end{equation}
for $\bm{x}\in C_j$ and $j=1,2,\ldots, m$. Since $\Phi(\bm{x}, \bm{y})$ is a real analytic function in $\bm{x}$ for $\bm{x}\in C_j$ and $\bm{y}\in C_l$ with $l\neq j$, one immediately sees from \eqref{eq:pp6} that $\bm{H}(\bm{x})$ is real analytic for $\bm{x}\in C_j$.

Next, we discuss the high-frequency asymptotics or the physical optics approximation of the electromagnetic plane wave scattering from a convex PEC polyhedron, which forms the basis for the current study. It states that the total electric or magnetic wave fields near the boundary of the obstacle are composed of two parts: the direct contribution from the incident wave and the reflected wave where they are present, and the contribution due to the diffraction from the corners and edges of the obstacle. The first contribution is the so-called physical optics approximation. Let $D$ be an admissible polyhedral PEC obstacle and let $C_j\subset\partial D^-$, $1\leq j\leq m$, be a front face of the obstacle. Here and in what follows, we let $C_j$ be parameterized as
\begin{equation}\label{eq:para1}
\langle \bm{\nu}_j, \bm{x}\rangle =l_j,
\end{equation}
where $l_j$ denote the distance from the origin to the plane in $\mathbb{R}^3$ containing $C_j$. Let $C_j^0$ denote the affine plane of $C_j$, i.e., $\langle \bm{\nu}_j, \bm{x}\rangle =0$ for $\bm{x}\in C_j^0$ and let $\mathcal{R}_{C_j^0}$ denote the usual Euclidean reflection in $\mathbb{R}^3$ with respect to $C_j^0$. Now, we consider the scattering near the face $C_j$ of the PEC obstacle $D$ due to an incident plane wave $(\bm{E}^i, \bm{H}^i)$ in \eqref{eq:planewave}. Let $\bm{x}_0^j\in C_j$ be any fixed point and set
\begin{equation}\label{eq:ref1}
\mathcal{H}(\bm{x}):=\nabla\times (\mathcal{R}_{C_j^0} \bm{p}) e^{ik(\bm{x}-\bm{x}_0)\cdot (\mathcal{R}_{C_j^0} \bm{d})} \, e^{ik\bm{x}_0\cdot \bm{d}}.
\end{equation}
It is straightforward to verify, though with a bit tedious calculations, that $\mathcal{H}(\bm{x})$ and
\begin{equation}\label{eq:ref2}
\mathcal{E}(\bm{x}):=\frac{i}{k}\nabla\times \mathcal{H}(\bm{x}),
\end{equation}
are entire solutions to the Maxwell equations \eqref{eq:Maxwell2}. Moreover, $\bm{\nu}_j\times (\bm{E}+\mathcal{E})(\bm{x})=0$ on $C_j$. In fact $\mathcal{E}$ and $\mathcal{H}$ are, respectively, the locally reflected wave fields of $\bm{{E}^i}$ and $\bm{H}^i$ with respect to $C_j$. Therefore, using the physical optics approximation, one would have
\begin{equation}\label{eq:poa1}
\bm{\nu}_j\times \bm{H}(\bm{x})=\bm{\nu}_j\times (\bm{H}^i+\bm{H}^s)(\bm{x})\approx\bm{\nu}_j\times (\bm{H}^i+\mathcal{H})(\bm{x})=2\bm{\nu}_j\times \bm{H}^i(\bm{x})\ \mbox{for}\ \bm{x}\in C_j.
\end{equation}
A rigorous mathematical justification of the above physical optics approximation is fraught with significant difficulties. Indeed, most of the available results in the literature mainly concern the scalar wave scattering governed by the Helmholtz equation; see \cite{CWL,HLM,LaxPhi,Maj,MelTay}. However, even for the scalar case, the rigorous justification of the physical optics approximation is still not fully understood; see \cite{CWL} for an excellent account of the existing theoretical and computational progresses in the literature. In the present work, we focus on the study of the corresponding inverse scattering problem by assuming that the physical optics approximation holds true. It is interesting to note that our theoretical and numerical results clearly validate such approximation.

Summarizing the above discussion, we have

\begin{lem}\label{lem:h1}
Let $\bm{E}$ and $\bm{H}$ be the total wave fields of the scattering problem \eqref{eq:Maxwell2}--\eqref{eq:radiation} due to an admissible polyhedral PEC obstacle $D$. Using the physical optics approximation, one has
\begin{equation}\label{eq:hh1}
\bm{\nu}(\bm{x})\times \bm{H}(\bm{x})\approx 
\begin{cases}
2\bm{\nu}_j(\bm{x})\times \bm{H}^i(\bm{x}),\ \ & \bm{x}\in C_j\subset \partial D^-,\ \ 1\leq j\leq m,\\[1ex]
\qquad 0,\ \ & \bm{x}\in C_{j'}\subset\partial D^+,\ \ 1\leq j'\leq m.
\end{cases}
\end{equation}
\end{lem}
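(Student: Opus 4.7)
The statement is essentially a formalization of the physical optics heuristic that the authors have just set up in equations \eqref{eq:ref1}--\eqref{eq:poa1}, so my plan is to make that derivation precise case by case.

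First I would dispose of the back-face case $\bm{x}\in C_{j'}\subset\partial D^+$. On this portion of $\partial D$ the face lies in the shadow region with respect to $\bm{d}$, so the physical optics principle simply posits that the total field is negligible there; since this is what we are assuming throughout, the second line of \eqref{eq:hh1} is immediate. I would just state this as a consequence of the standard shadow hypothesis in physical optics, noting the supporting role played by the admissibility condition \eqref{eq:cond5}, which prevents grazing incidence and hence prevents a face from straddling the shadow boundary.

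For the main case $\bm{x}\in C_j\subset\partial D^-$, the strategy is to replace the true scattered field $(\bm{E}^s,\bm{H}^s)$ near $C_j$ by the explicit locally reflected plane wave $(\mathcal{E},\mathcal{H})$ defined in \eqref{eq:ref1}--\eqref{eq:ref2} and read off the tangential trace. I would verify three properties in order:
\begin{enumerate}
\item \emph{Maxwell compatibility.} A direct calculation from \eqref{eq:ref1}--\eqref{eq:ref2}, using $|\mathcal{R}_{C_j^0}\bm{d}|=1$ and $(\mathcal{R}_{C_j^0}\bm{d})\cdot(\mathcal{R}_{C_j^0}\bm{p})=\bm{d}\cdot\bm{p}=0$, shows that $(\mathcal{E},\mathcal{H})$ is itself a plane wave solving \eqref{eq:Maxwell2} in all of $\mathbb{R}^3$.
\item \emph{PEC trace matching.} Since reflection across $C_j^0$ fixes phases on $C_j$ (because $\mathcal{R}_{C_j^0}\bm{d}$ and $\bm{d}$ have the same tangential component on $C_j^0$ and the normal components are opposite), one checks $\bm{\nu}_j\times(\bm{E}^i+\mathcal{E})=0$ and, using $\bm{H}^i=\bm{d}\times\bm{p}$ together with the identity $\bm{\nu}_j\times(\bm{d}\times\bm{p})=\bm{\nu}_j\times\bigl((\mathcal{R}_{C_j^0}\bm{d})\times(\mathcal{R}_{C_j^0}\bm{p})\bigr)$ on $C_j$, also
\[
\bm{\nu}_j\times\mathcal{H}(\bm{x})=\bm{\nu}_j\times\bm{H}^i(\bm{x}),\qquad \bm{x}\in C_j.
\]
\item \emph{Physical optics substitution.} Near $C_j$ the true scattered field is well approximated by this reflected plane wave, i.e., $\bm{\nu}_j\times\bm{H}^s\approx\bm{\nu}_j\times\mathcal{H}$ on $C_j$. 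Adding to the incident trace then gives
\[
\bm{\nu}_j\times\bm{H}(\bm{x})\approx\bm{\nu}_j\times(\bm{H}^i+\mathcal{H})(\bm{x})=2\bm{\nu}_j\times\bm{H}^i(\bm{x}),\quad \bm{x}\in C_j,
\]
which is the first line of \eqref{eq:hh1}.
\end{enumerate}

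The main obstacle is step 3 itself: justifying that the true induced current $\bm{\nu}\times\bm{H}^s|_{C_j}$ is accurately replaced by its Kirchhoff/reflected-plane-wave surrogate, up to diffraction contributions from the edges and corners of $\partial C_j$. As the authors emphasize just before the lemma, a fully rigorous proof of this is not available even in the scalar Helmholtz setting, and the admissibility conditions \eqref{eq:cond2}--\eqref{eq:cond5} are exactly what one would need in order to control the edge/corner diffraction uniformly (bounded face perimeter, well-separated face normals, non-grazing incidence, faces not too small). Accordingly, I would not attempt a rigorous estimate here; following the paper's standing assumption, I would simply invoke the physical optics approximation in step 3 and record the resulting asymptotic identity, presenting Lemma \ref{lem:h1} as the clean summary of \eqref{eq:poa1} plus the shadow-side vanishing.
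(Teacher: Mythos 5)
Your proposal follows essentially the same route as the paper: Lemma~\ref{lem:h1} is not proved rigorously there either, but is presented as the summary of the reflected-plane-wave construction \eqref{eq:ref1}--\eqref{eq:ref2} and the physical optics substitution \eqref{eq:poa1}, exactly as in your steps 1--3, with the shadow side set to zero by the standard physical optics hypothesis. One small caution on your step 2: since $\mathcal{R}_{C_j^0}$ is orientation-reversing one has $(\mathcal{R}_{C_j^0}\bm{d})\times(\mathcal{R}_{C_j^0}\bm{p})=-\mathcal{R}_{C_j^0}(\bm{d}\times\bm{p})$, so the identity you quote holds only after taking the reflected polarization to be $-\mathcal{R}_{C_j^0}\bm{p}$ (which is precisely what the PEC matching $\bm{\nu}_j\times(\bm{E}^i+\mathcal{E})=0$ forces); with that sign the tangential magnetic trace doubles as claimed, consistent with \eqref{eq:poa1}.
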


We proceed to derive the local maximum behavior of $|\bm{E}^\infty(\hat{\bm{x}})|$ for our study of the inverse scattering problem. Let
\[
\Sphere^{2}_+:=\{\hat{\bm{x}}\in\Sphere^{2}: \ \hat{\bm{x}}\cdot \bm{d}\geq 0\}\quad\mbox{and}\quad \Sphere^{2}_-:=\{\hat{\bm{x}}\in\Sphere^{2}: \ \hat{\bm{x}}\cdot \bm{d}<0\}
\]
denote, respectively, the forward-scattering and backscattering apertures. Let $C_j\subset\partial D^-$, $1\leq j\leq m$, be a front-face of $\partial D$, and $\bm{\nu}_j\in \Sphere^{2}_-$ denote its unit normal vector pointing to the exterior of $D$.  Define
\begin{equation}\label{eq:parallel}
  \hat{\bm{x}}_j = \mathcal{R}_{C_j^0} \bm{d} = \bm{d} - (2 \bm{d} \cdot \bm{\nu}_j) \bm{\nu}_j
\end{equation}
to be the critical observation direction with respect to $\bm{d}$ and $\bm{\nu}_j$. It is directly calculated that one has
\begin{equation}\label{eq:cd3}
  \bm{\nu}_j=\frac{\hat{\bm{x}}_j-\bm{d}}{\sqrt{2(1-\hat{\bm{x}}_j\cdot \bm{d})}}.
\end{equation}

\begin{defn}
  Let $A: \Sphere^2 \to \RR^+$ be a continuous function. A point $\hat{\bm{z}} \in \Sphere^2$ is said to be an {\em approximate local maximum} of $A$ if there exists a neighborhood $V \subset \Sphere^2$ of $\hat{\bm{x}}$ such that
  \begin{align*}
    A(\hat{\bm{x}})=A_0(\hat{\bm{x}})+A_1(\hat{\bm{x}}), \quad \hat{\bm{x}} \in V,
  \end{align*}
  and $\hat{\bm{z}}$ is the usual local maximum of $A_0$ in $V$ with $\max_{\hat{\bm{x}} \in V} A_0(\hat{\bm{x}}) \gg \max_{\hat{\bm{x}} \in V}  A_1(\hat{\bm{x}})$. 
\end{defn}

\begin{thm}\label{thm:main}
Let $D$ be an admissible polyhedral PEC obstacle with respect to the incident plane wave $(\bm{E}^i,\bm{H}^i)$ in \eqref{eq:planewave}. Suppose that $C_j\subset\partial D^-$ is a front face of the obstacle, and $\bm{\nu}_j$ is the unit normal vector to $C_j$ pointing to the exterior of $D$, $1\leq j\leq m$. Let $\hat{\bm{x}}_j\in\Sphere^{2}$ be the critical observation direction with respect to $\bm{d}$ and $\bm{\nu}_j$. Under the physical optics approximation of Lemma~\ref{lem:h1}, $\hat{\bm{x}}_j$ is an approximate local maximum point of $|\bm{E}^\infty(\hat{\bm{x}})|$ as well as $|\bm{H}^\infty(\hat{\bm{x}})|$, and the maximal value is given by 
\begin{align}
  \label{eq:13}
  |\bm{E}^\infty(\hat{\bm{x}}_j)| \approx |\bm{H}^\infty(\hat{\bm{x}}_j)| \approx \frac{|C_j|}{\lambda} |\bm{d} \cdot \bm{\nu}_j|,
\end{align}
where $\lambda=2\pi/k$ denotes the wavelength. Moreover $\hat{\bm{x}}=\bm{d}$ is also a local maximum of $|\bm{E}^\infty(\hat{\bm{x}})|$ as well as $|\bm{H}^\infty(\hat{\bm{x}})|$,and the maximal value is given by
\begin{align*}
  |\bm{E}^\infty(\bm{d})| \approx |\bm{H}^\infty(\bm{d})| \approx \sum_{\alpha=1}^\beta \frac{|C_\alpha|}{\lambda} |\bm{d} \cdot \bm{\nu}_\alpha|,
\end{align*}
where the sum is taken such that
\begin{align*}
  \partial D^- = \bigcup_{\alpha=1}^\beta C_\alpha
\end{align*}
with $\alpha \in \{1,\cdots,m\}$.
\end{thm}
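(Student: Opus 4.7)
The plan is to substitute the physical-optics approximation from Lemma~\ref{lem:h1} into the far-field representation~\eqref{eq:repfar} and analyze the resulting sum of oscillatory integrals face-by-face. By~\eqref{eq:relation}, $|\bm{E}^\infty(\hat{\bm{x}})|=|\bm{H}^\infty(\hat{\bm{x}})|$, so it suffices to work with the magnetic far field. Inserting~\eqref{eq:hh1} and $\bm{H}^i(\bm{y})=e^{ik\bm{y}\cdot\bm{d}}(\bm{d}\times\bm{p})$ into~\eqref{eq:repfar} yields
$$
\bm{H}^\infty(\hat{\bm{x}}) \approx \frac{ik}{2\pi}\,\hat{\bm{x}}\times\sum_{\alpha=1}^{\beta} [\bm{\nu}_\alpha\times(\bm{d}\times\bm{p})]\,I_\alpha(\hat{\bm{x}}),\qquad I_\alpha(\hat{\bm{x}}):=\int_{C_\alpha}e^{ik(\bm{d}-\hat{\bm{x}})\cdot\bm{y}}\,{\rm d}s_{\bm{y}},
$$
the sum being over the front faces $C_\alpha\subset\partial D^-$.

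Next I would analyse the scalar factors $I_\alpha$. Each satisfies $|I_\alpha(\hat{\bm{x}})|\le|C_\alpha|$, with equality precisely when $\bm{d}-\hat{\bm{x}}$ is normal to $C_\alpha$; combined with $|\hat{\bm{x}}|=1$ this forces $\hat{\bm{x}}=\bm{d}$ or $\hat{\bm{x}}=\hat{\bm{x}}_\alpha$. In particular $I_j(\hat{\bm{x}}_j)=|C_j|e^{2ikl_j(\bm{d}\cdot\bm{\nu}_j)}$, whereas admissibility condition~\eqref{eq:cond2} forces the tangential component of $\bm{d}-\hat{\bm{x}}_j$ on any other front face $C_{\alpha'}$ ($\alpha'\neq j$) to be uniformly bounded below; a non-stationary phase estimate on the polygonal surface (with constants controlled by $h_3,h_4$ via~\eqref{eq:cond3}--\eqref{eq:cond4}) then produces $|I_{\alpha'}(\hat{\bm{x}})|=\mathcal{O}(k^{-1})$ throughout a small neighbourhood of $\hat{\bm{x}}_j$. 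Writing $\bm{H}^\infty=\bm{H}^\infty_0+\bm{H}^\infty_1$ with $\bm{H}^\infty_0$ the $\alpha=j$ summand and $\bm{H}^\infty_1$ the remainder places the argument in the framework of the approximate-local-maximum definition.

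For the amplitude at $\hat{\bm{x}}_j$ I would expand the triple product: using $\bm{\nu}_j\times(\bm{d}\times\bm{p})=(\bm{\nu}_j\cdot\bm{p})\bm{d}-(\bm{\nu}_j\cdot\bm{d})\bm{p}$, $\hat{\bm{x}}_j=\bm{d}-2(\bm{d}\cdot\bm{\nu}_j)\bm{\nu}_j$ and $\bm{d}\cdot\bm{p}=0$, a short computation gives $\hat{\bm{x}}_j\cdot[\bm{\nu}_j\times(\bm{d}\times\bm{p})]=\bm{\nu}_j\cdot\bm{p}$ and hence $|\hat{\bm{x}}_j\times[\bm{\nu}_j\times(\bm{d}\times\bm{p})]|=|\bm{d}\cdot\bm{\nu}_j|$. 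Combining with $|I_j(\hat{\bm{x}}_j)|=|C_j|$ and $\lambda=2\pi/k$ yields $|\bm{H}^\infty_0(\hat{\bm{x}}_j)|\approx (|C_j|/\lambda)|\bm{d}\cdot\bm{\nu}_j|$, which is the asserted value and, since $|I_j|$ peaks at $\hat{\bm{x}}_j$ while the vector prefactor varies smoothly, is also a local maximum of $|\bm{H}^\infty_0|$. The case $\hat{\bm{x}}=\bm{d}$ is easier: the phase vanishes, so $I_\alpha(\bm{d})=|C_\alpha|$ for every front face, and the identity $\bm{d}\times[\bm{\nu}_\alpha\times(\bm{d}\times\bm{p})]=-(\bm{\nu}_\alpha\cdot\bm{d})(\bm{d}\times\bm{p})$ with $|\bm{d}\times\bm{p}|=1$ lets all front-face contributions add coherently to $(1/\lambda)\sum_\alpha|C_\alpha||\bm{d}\cdot\bm{\nu}_\alpha|$.

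The principal obstacle is the uniform oscillatory-integral estimate $|I_{\alpha'}(\hat{\bm{x}})|=\mathcal{O}(k^{-1})$ for $\alpha'\neq j$ on a polygonal piece: integration by parts generates boundary contributions that must be controlled by~\eqref{eq:cond3}--\eqref{eq:cond4}, and one must shrink the neighbourhood of $\hat{\bm{x}}_j$ enough to separate it from every competing critical direction $\hat{\bm{x}}_{\alpha'}$ and from $\bm{d}$, the latter separation being guaranteed by the significance condition $|\bm{d}\cdot\bm{\nu}_j|\ge h_5$ in~\eqref{eq:cond5}. In that regime the $\mathcal{O}(k^{-1})$ remainder is dominated by the $\mathcal{O}(1)$ principal term, so $\hat{\bm{x}}_j$ is an approximate local maximum of $|\bm{H}^\infty|$ (and of $|\bm{E}^\infty|$) in the sense required.
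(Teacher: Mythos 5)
Your proposal is correct and follows essentially the same route as the paper: the same physical-optics substitution into \eqref{eq:repfar}, the same decomposition into the distinguished face plus a remainder killed by a Stokes/non-stationary-phase estimate using \eqref{eq:cond2}--\eqref{eq:cond5}, and the same evaluation of the peak amplitude. In fact your explicit computation $|\hat{\bm{x}}_j\times[\bm{\nu}_j\times(\bm{d}\times\bm{p})]|=|\bm{d}\cdot\bm{\nu}_j|$ supplies a detail the paper's passage from $\bm{A}_1$ to $\bm{A}_{11}$ leaves implicit.
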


\begin{proof}
We first prove that $\hat{\bm{x}}_j$ is a local maximum of $|\bm{H}^\infty(\hat{\bm{x}})|$. By using the integral representation \eqref{eq:repfar} and the physical optics approximation \eqref{eq:hh1}, one has
\begin{equation}\label{eq:pc1}
\bm{H}^\infty(\hat{\bm{x}})\approx \frac{ik}{2\pi}\hat{\bm{x}}\times\int_{\partial D^-} e^{-ik\hat{\bm{x}}\cdot \bm{y}} \left[ \bm{\nu}(\bm{y})\times \bm{H}^i(\bm{y}) \right] \, {\rm d}s_{\bm{y}},
\end{equation}
Using the form of $\bm{H}^i$ in \eqref{eq:planewave}, one further has by direct calculations that
\begin{align}
\bm{H}^\infty(\hat{\bm{x}}) & \approx \frac{ik}{2\pi} \sum_{\alpha=1}^\beta \hat{\bm{x}}\times \left[ \bm{\nu}_\alpha \times (\bm{d}\times\bm{p}) \right] \int_{C_\alpha} e^{ik(\bm{d}-\hat{\bm{x}})\cdot\bm{y}} \, {\rm d} s_{\bm{y}} \label{eq:16} \\[1ex]
&=\frac{ik}{2\pi}\left[\bm{A}_1(\hat{\bm{x}})+\bm{A}_2(\hat{\bm{x}}) \right],\quad \hat{\bm{x}}\in\Sphere^2 \label{eq:pc2},
\end{align}
where
\begin{align}
\bm{A}_1(\hat{\bm{x}})& := \hat{\bm{x}} \times \left[ \bm{\nu}_j \times (\bm{d}\times\bm{p}) \right] \int_{C_j} e^{ik(\bm{d}-\hat{\bm{x}})\cdot\bm{y}} \, {\rm d}s_{\bm{y}} \\[1ex]
\bm{A}_2(\hat{\bm{x}}) & := \sum_{\alpha=1,\alpha \neq j}^\beta \hat{\bm{x}}\times \left[ \bm{\nu}_\alpha \times (\bm{d}\times\bm{p}) \right] \int_{C_\alpha} e^{ik(\bm{d}-\hat{\bm{x}})\cdot\bm{y}} \, {\rm d} s_{\bm{y}} \label{eq:5}
\end{align}
Clearly, in order to consider the local maximum behavior of $|\bm{H}^\infty(\hat{\bm{x}})|$, it suffices to consider the local maximum behavior of $|\bm{A}_1(\hat{\bm{x}})+\bm{A}_2(\hat{\bm{x}})|$.

Let $\Sigma_j\subset\Sphere^2$ be a small neighborhood of $\hat{\bm{x}}_j$. Fix an $\hat{\bm{x}} \in \Sigma_j$ and let
\begin{align*}
  \bm{\tau}_\alpha = \bm{\nu}_\alpha \times (\bm{d} - \hat{\bm{x}}), \quad \bm{F} = \frac{1}{ik} e^{ik(\bm{d}-\hat{\bm{x}}) \cdot \bm{y}} \bm{\tau}_\alpha.
\end{align*}
By direct calculation we have from \eqref{eq:parallel} and condition \eqref{eq:cond2} that
\begin{align}
  \label{eq:2}
  \left| \int_{C_\alpha} {\rm curl}_{\bm{y}} \, \bm{F} \cdot {\rm d}s_{\bm{y}} \right| = \left| \bm{\tau}_\alpha \right|^2 \left| \int_{C_\alpha} e^{ik(\bm{d}-\hat{\bm{x}}) \cdot \bm{y}} \, {\rm d}s_{\bm{y}} \right| 
  \gtrsim \left| \int_{C_\alpha} e^{ik(\bm{d}-\hat{\bm{x}}) \cdot \bm{y}} \, {\rm d}s_{\bm{y}} \right|, \quad \alpha \neq j,
\end{align}
Henceforth the expression $A \gtrsim B$ (resp. $A \lesssim B$) means $A \geq c B$ (resp. $A \leq c B$) for some positive constant $c$ depending only on the a priori constants $h_i,i=1,\cdots,5$ defined in conditions \eqref{eq:cond1}--\eqref{eq:cond5}. On the other hand we have from Stoke's theorem and condition \eqref{eq:cond4} that
\begin{align}
  \label{eq:3}
  \left| \int_{C_\alpha} {\rm curl}_{\bm{y}} \, \bm{F} \cdot {\rm d}s_{\bm{y}} \right| =
  \left| \int_{\partial C_\alpha} \bm{F} \cdot {\rm d}l_{\bm{y}} \right| \lesssim \frac{1}{k},
\end{align}
Combining \eqref{eq:3} and \eqref{eq:2}, we arrive at
\begin{align}
  \label{eq:4}
  \left| \int_{C_\alpha} e^{ik(\bm{d}-\hat{\bm{x}}) \cdot \bm{y}} \, {\rm d}s_{\bm{y}} \right| \lesssim \frac{1}{k}, \quad \alpha \neq j,
\end{align}
Plugging \eqref{eq:4} into \eqref{eq:5}, we have for $k$ sufficiently large
\begin{equation}
  \label{eq:12}
|\bm{A}_2(\hat{\bm{x}})|\ll 1\quad \mbox{for}\ \hat{\bm{x}} \in \Sigma_j.
\end{equation}

Next, we evaluate $\bm{A}_1(\hat{\bm{x}})$ for $\hat{\bm{x}} \in \Sigma_j$. Let $\bm{y}_0^j\in C_j$ be any fixed point, and define $\widetilde{C}_j^0:=C_j-\{\bm{y}_0^j\}$. Recall that $C_j^0$ denotes the affine plane corresponding to $C_j$. One clearly has that $\widetilde{C}_j^0\subset C_j^0$. By straightforward calculations, one has that for $\hat{\bm{x}}\in \Sigma_j$
\begin{align}
|\bm{A}_1(\hat{\bm{x}})|&= \left|\hat{\bm{x}} \times \left[ \bm{\nu}_j\times(\bm{d}\times\bm{p}) \right] \int_{C_j} e^{ik(\bm{d}-\hat{\bm{x}})\cdot \bm{y}} \, {\rm d}s_{\bm{y}}\right| \notag \\[1ex]
&= \left| e^{ik(\bm{d}-\hat{\bm{x}})\cdot \bm{y}_0^j} \, \hat{\bm{x}} \times \left[ \bm{\nu}_j\times(\bm{d}\times\bm{p}) \right] \int_{\widetilde{C}_j^0} e^{ik(\bm{d}-\hat{\bm{x}})\cdot \bm{y}} \, {\rm d}s_{\bm{y}} \right| \notag \\[1ex]
&= \left| \hat{\bm{x}} \times \left[ \bm{\nu}_j\times(\bm{d}\times\bm{p}) \right] \int_{\widetilde{C}_j^0} e^{ik(\bm{d}-\hat{\bm{x}})\cdot \bm{y}} \, {\rm d}s_{\bm{y}} \right|. \label{eq:6}
\end{align}
Let $\hat{\bm{x}} = \hat{\bm{x}}_j + \bm{\varepsilon} \in \Sigma_j$, where $\bm{\varepsilon} \in \mathbb{R}^3$ is such that $|\bm{\varepsilon}| \ll 1$. By virtue of \eqref{eq:parallel}, we can write
\begin{align}
  \label{eq:1}
  \hat{\bm{x}} = -(2 \bm{d} \cdot \bm{\nu}_j) \bm{\nu}_j + \bm{d} + \bm{\varepsilon}.
\end{align}
Substituting \eqref{eq:1} into \eqref{eq:6} and noting that $\bm{\nu}_j \cdot \bm{y} = 0$ for $\bm{y} \in \widetilde{C}_j^0$, we obtain
\begin{align}
  \label{eq:7}
  |\bm{A}_1(\hat{\bm{x}})| = |\bm{A}_{11}(\hat{\bm{x}})| + |\bm{A}_{12}(\hat{\bm{x}})|
\end{align}
with 
\begin{align}
  |\bm{A}_{12}(\hat{\bm{x}})| \ll 1 \label{eq:11}
\end{align}
and
\begin{align}
  \label{eq:8}
  |\bm{A}_{11}(\hat{\bm{x}})| = \left| (\bm{d} \cdot \bm{\nu}_j) \int_{\widetilde{C}_j^0} e^{-ik \bm{\varepsilon} \cdot \bm{y}} \, {\rm d}s_{\bm{y}} \right|.
\end{align}
Clearly the maximum of $|\bm{A}_{11}(\hat{\bm{x}})|$ is achieved at $\bm{\varepsilon}=0$, i.e. $\hat{\bm{x}}=\hat{\bm{x}}_j$ with maximal value
\begin{align} \label{eq:14}
  \max_{\hat{\bm{x}} \in \Sigma_j} |\bm{A}_{11}(\hat{\bm{x}})| = |\bm{d} \cdot \bm{\nu}_j| |C_j|.
\end{align}

Under conditions \eqref{eq:cond3} and \eqref{eq:cond5} we have
\begin{align}
  \label{eq:10}
  \max_{\hat{\bm{x}} \in \Sigma_j} |\bm{A}_{11}(\hat{\bm{x}})| \gtrsim 1.
\end{align}

Combining \eqref{eq:12}, \eqref{eq:11} and \eqref{eq:10} readily implies that $\hat{\bm{x}}_j$ is an approximate local maximum of $|\bm{H}^\infty(\hat{\bm{x}})|$. Combining \eqref{eq:pc2}, \eqref{eq:7} and \eqref{eq:14} implies the second part of \eqref{eq:13}. 

Finally, we note by \eqref{eq:relation} that $|\bm{E}^\infty(\hat{\bm{x}})|=|\bm{H}^\infty(\hat{\bm{x}})|$ for each $\hat{\bm{x}} \in \Sphere^2$. Hence $\hat{\bm{x}}_j$ is also an approximate local maximum of $|\bm{E}^\infty(\hat{\bm{x}})|$ and the first part of \eqref{eq:13} holds.

Finally we show $\hat{\bm{x}}=\bm{d}$ is also a local maximum of $|\bm{H}^\infty(\hat{\bm{x}})|$ and $|\bm{E}^\infty(\hat{\bm{x}})|$. Let $\Sigma \subset \Sphere^2$ be a small neighborhood of $\bm{d}$ and $\hat{\bm{x}} = \bm{d} + \bm{\varepsilon} \in \Sigma$ for some $\bm{\varepsilon} \ll 1$. Substituting  $\hat{\bm{x}} = \bm{d} + \bm{\varepsilon}$ into \eqref{eq:16} yields
\begin{align*}
  {\bm H}^\infty(\hat{\bm{x}}) \approx \frac{ik}{2\pi} \left[ \bm{B}_1(\hat{\bm{x}}) + \bm{B}_2(\hat{\bm{x}}) \right]
\end{align*}
where $|\bm{B}_1(\hat{\bm{x}})| \ll 1$ and
\begin{align*}
  \bm{B}_2(\hat{\bm{x}}) = -\sum_{\alpha=1}^\beta (\bm{d} \cdot \bm{\nu}_\alpha) (\bm{d} \times \bm{p}) \int_{C_\alpha} e^{-ik\bm{\varepsilon}\cdot \bm{y}} \, {\rm d}s_{\bm y}
\end{align*}
Clearly the maximum of $|\bm{B}_2(\hat{\bm{x}})|$ is obtained at $\bm{\varepsilon}=0$, i.e $\hat{\bm{x}} = \bm{d}$ with maximal value 
\begin{align*}
|\bm{B}_2(\bm{d})| = \sum_{\alpha=1}^\beta |\bm{d} \cdot \bm{\nu}_\alpha| |C_\alpha|.
\end{align*}
The argument for the local maximum behavior of $|\bm{E}^\infty(\hat{\bm{x}})|$ is similar and omitted.

The proof is complete.
\end{proof}

\begin{rem}
  Theorem \ref{thm:main} tells that the maximal value of $\bm{H}^\infty$ (and $\bm{E}^\infty$) in the incident direction $\bm{d}$ is approximately the sum of the maximal values of $\bm{H}^\infty$ (and $\bm{E}^\infty$) in the critical observation directions. In the sequel numerical experiments, we see $\bm{d}$ is in fact the global maximum as long as $k$ is sufficiently large, but we do not have a mathematical justification.
\end{rem}

\section{Recovery Scheme} \label{sec:scheme}
Based on Theorem \ref{thm:main}, we propose the following scheme for the recovery of the face normals and areas for the polyhedron $D$.

\noindent {\bf Step 1: Recover the face normals and areas}

\begin{enumerate}
\item Choose a set of incident directions $\{\bm{d}_n\}_{n=1}^N$ such that the union of the corresponding front-view faces covers $\partial D$. Choose a wave number $k$ such that $k|D| \gg 1$.
\item For each $n=1,\cdots,N$ send an incident plane wave of the form \eqref{eq:planewave} with incident direction $\bm{d}_n$ and wavenumber $k$, and collect the phaseless far-field data $|\bm{E}^\infty(\hat{\bm{x}})|, \hat{\bm{x}} \in \Sphere^2$.
\item For each $n=1,\cdots,N$ find the local maxima $\hat{\bm{x}}_j$ of $|\bm{E}^\infty(\hat{\bm{x}})|$ and the maximal value $\max_{\hat{\bm{x}} \in \Sphere^2} |\bm{E}^\infty(\hat{\bm{x}}_j^n)|$. 

In practice the data is not measured on every point on the sphere but only a discrete set $\mathcal{T} \subset \Sphere^2$ of grid points. Besides, there always exists measurement noise. Hence we need to find the local maxima of a set of scattered data (with noises) on the unit sphere. To our best knowledge there exists no specially designed algorithm for this task. We propose the following algorithm:
\begin{enumerate}
\item Find the coefficients of the spherical harmonic transform of phaseless far-field data $|\bm{E}^\infty(\hat{\bm{x}})|, \hat{\bm{x}} \in \Sphere^2$ up to a fixed order $n_c \in \mathbb{N}$, i.e.
  \begin{align}
    \label{eq:15}
    c_n^m = \int_{\Sphere^2} |\bm{E}^\infty(\hat{\bm{x}})| Y_n^m(\hat{\bm{x}}) \, {\rm d}s, \quad n=0,\cdots,n_c, m=-n,\cdots,n
  \end{align}
\item Once the Fourier coefficients $c_n^m$ are computed, we define the function
  \begin{align}
    \label{eq:17}
    f(\hat{\bm{x}}) = \Re\,\left( \sum_{n=0}^{n_c} \sum_{m=-n}^n c_n^m Y_n^m(\hat{\bm{x}}) \right), \quad \hat{\bm{x}} \in \Sphere^2
  \end{align}
as a continuous approximation of the measurement data and find all its local maxima efficiently using nonlinear optimization algorithms.
\end{enumerate}
The advantage of the above scheme is two-fold. First, it converts the discrete data into a continuous function so that its local maxima can be found efficiently. Second, the measurement noise can be filtered out by controlling the cut-off frequency $n_c$ and the effect of noise on the result decreases as the sampling rate increases.

Some of the local maxima $\hat{\bm{x}}_j$ does not correspond to critical observation directions. By Theorem \ref{thm:main} the direction $\hat{\bm{x}}=\bm{d}$ is also a local maximum of $|\bm{E}^\infty(\hat{\bm{x}})|$. Hence this direction is exluded from the search for face normals. If we have an a priori estimate of
\begin{align*}
  \sigma=\min \{ |\hat{\bm{x}}_j - \bm{d}_n|: \hat{\bm{x}}_j \mbox{ is a critical observation direction for } \bm{d}_n \},
\end{align*}
then we can also exclude all directions within a distance of $\sigma$ from $\bm{d}_n$. Besides those directions, there may exists other local maxima that are not associated with any critical observation direction.
Numerical experience indicates those local maxima usually attain smaller values than the critical observation directions and the incident direction $\bm{d}_n$. Hence we also delete the local maxima $\hat{\bm{x}}_j^n$ such that such that $|\bm{E}^\infty(\hat{\bm{x}}_j)| < \bm{E}^\infty_{\rm tol}$ for some a priori settled threshold $\bm{E}^\infty_{\rm tol}$. In view of \eqref{eq:13}, this threshold may be chosen as
\begin{align*}
  \bm{E}^\infty_{\rm tol} = \frac{1}{\lambda} \min \{ |C_j| \min | \bm{d}_n \cdot \bm{\nu}_j |: C_j \mbox{ is a front-view face for } \bm{d}_n \}.
\end{align*}

Finally we compute the face normals $\bm{\nu}_j$ from $\hat{\bm{x}}_j$ using \eqref{eq:cd3}.
\item Due to the overlaps in the front-view faces for all incident directions, some of the normals $\bm{\nu}_j$ obtained in the previous step are very close and corresponds to the same face. For a cluster of $\bm{\nu}_j$ that are very close to each other, we choose the one corresponding to the largest $|\bm{E}^\infty(\hat{\bm{x}}_j)|$ as the {\em effective normal vector}. 
\item According to \eqref{eq:13}, we compute the face area $|C_j| \approx \lambda |\bm{E}^\infty(\hat{\bm{x}}_j)| / |\bm{d}_n \cdot \bm{\nu}_j|$ for each of the effective normal vector $\bm{\nu}_j$. 
\end{enumerate}

Let $\bm{\nu}_1,\cdots,\bm{\nu}_k$ and $A_1,\cdots,A_k$ be the face normals and areas found in step 1. The problem is then reduced to the classical Minkowski problem in computational geometry. The following existence and uniqueness result is known (cf. \cite{Klain}).

\begin{prop}
  Let $\bm{\nu}_1,\cdots,\bm{\nu}_k \in \RR^n$ be unit vectors that span $\RR^n$ and $A_1,\cdots,A_k$ be positive scalars. Then there exists a convex polytope having face normals $\bm{\nu}_1,\cdots,\bm{\nu}_k$ and face areas $A_1,\cdots,A_k$ if and only if $\sum_{j=1}^k A_j \bm{\nu}_j = 0$. Moreover the polytope is unique up to translations.
\end{prop}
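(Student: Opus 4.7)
The necessity direction is immediate from the divergence theorem: for any constant vector field $\bm{c} \in \RR^n$ on the polytope $P$, one has $0 = \int_{\partial P} \bm{c} \cdot \bm{\nu}\, dS = \bm{c} \cdot \sum_{j=1}^k A_j \bm{\nu}_j$, and arbitrariness of $\bm{c}$ yields $\sum_j A_j \bm{\nu}_j = 0$. The substantive content lies in sufficiency and uniqueness.

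For sufficiency I would employ Minkowski's classical variational construction. For each $\bm{h} = (h_1,\ldots,h_k) \in \RR^k$ define
\[
P(\bm{h}) := \{ x \in \RR^n : \bm{\nu}_j \cdot x \leq h_j,\ j=1,\ldots,k \},
\]
let $V(\bm{h})$ denote its $n$-dimensional volume and $A_j(\bm{h})$ the area of its $j$-th bounding face (possibly zero). The spanning hypothesis ensures $P(\bm{h})$ is a bounded convex polytope for $\bm{h}$ in an appropriate open region, and a standard computation yields the identity $\partial V/\partial h_j = A_j(\bm{h})$. I would then minimize the linear functional $\bm{h} \mapsto \sum_j A_j h_j$ subject to $V(P(\bm{h})) = 1$. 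Lagrange multipliers at an interior optimizer $\bm{h}^*$ give $A_j = \lambda\, A_j(\bm{h}^*)$ for some $\lambda > 0$, and a suitable homothety of $P(\bm{h}^*)$ then produces a polytope with exactly the prescribed face normals and areas.

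The main obstacle is proving existence of a \emph{non-degenerate} minimizer, i.e.\ one in which every hyperplane $\{\bm{\nu}_j \cdot x = h_j^*\}$ actually supports a face with positive $(n-1)$-dimensional area (otherwise some of the prescribed $A_j$ would not be realized). This is exactly where both hypotheses enter: the closure condition $\sum_j A_j \bm{\nu}_j = 0$ implies that the objective $\sum_j A_j h_j$ is invariant under the translation action $\bm{h} \mapsto \bm{h} + (\bm{\nu}_j \cdot \bm{t})_j$ induced by shifts $\bm{t} \in \RR^n$ of $P(\bm{h})$, so the minimization descends to a problem on the translation-quotient; the spanning hypothesis then guarantees that this quotient admits the compactness and coercivity needed for a minimizer to exist and be full-dimensional. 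Ruling out face collapse at the optimum via the identity $\partial V/\partial h_j = A_j(\bm{h})$ is the technical heart of the argument.

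For uniqueness up to translation, the cleanest route is via the Brunn--Minkowski inequality. Supposing $P_1$ and $P_2$ share the same face normals $\bm{\nu}_j$ and face areas $A_j$, the mixed-volume formula $n V(P_1, P_2, \ldots, P_2) = \sum_j A_j h_j^{(1)}$, where $h_j^{(1)}$ are the support numbers of $P_1$, combined with Minkowski's first inequality $V(P_1, P_2, \ldots, P_2)^n \geq V(P_1) V(P_2)^{n-1}$ and its symmetric counterpart, forces equality in Brunn--Minkowski. The equality case then characterizes $P_1$ and $P_2$ as homothetic, and equality of all face areas pins the homothety ratio to unity, leaving $P_1$ and $P_2$ as translates of each other. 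This part is classical but relies on the nontrivial equality case of Brunn--Minkowski for convex bodies.
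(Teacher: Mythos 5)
The paper offers no proof of this proposition: it is quoted as a known result and attributed to the cited reference of Klain on the Minkowski problem for polytopes, so there is no in-paper argument to compare against. Your sketch reproduces the classical proof that is essentially the content of that reference: the divergence-theorem computation for necessity is correct and complete; the variational construction (minimize $\sum_j A_j h_j$ over $\bm{h}$ with $V(P(\bm{h}))=1$, use $\partial V/\partial h_j = A_j(\bm{h})$ and a Lagrange multiplier, then rescale) is the standard route to existence; and the uniqueness argument via $nV(P_1,P_2,\dots,P_2)=\sum_j A_j h_j^{(1)}$, Minkowski's first inequality in both directions, and the equality case of Brunn--Minkowski is likewise the classical one. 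Two remarks on where your outline stops short of a proof. First, you correctly identify the technical heart (attainment of a full-dimensional minimizer on the translation quotient), but note that once the Lagrange rule is justified there, non-degeneracy of every face is automatic: the condition $A_j=\lambda A_j(\bm{h}^*)$ with $A_j>0$ and $\lambda>0$ forces $A_j(\bm{h}^*)>0$; the genuine work is the compactness/coercivity argument and the differentiability of $V$ at the minimizer (it is only piecewise polynomial in $\bm{h}$, with the combinatorial type changing across walls), which you would need to supply. Second, in the uniqueness step the homothety ratio is pinned down more directly by $V(P_1)=V(P_2)$, which you have already derived, than by matching face areas. As a blind reconstruction of a cited classical theorem, the proposal is sound and takes the expected approach.
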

Since the face normals and areas we found in step 1 are approximations to those true values from the underlying polyhedra, we have $\sum_{j=1}^k A_j \bm{\nu}_j \approx 0$ and it is expected the unique convex polyhedra recovered from those data is a good approximation to the underlying one up to translation. 

To build the polyhedron from the computed face normals and areas, we must find the equation of the planes containing each face. Since the normals of the planes are already computed, it remains to determine the offsets of the plane from a given reference point in the polyhedron. In view of that the polyhedron is uniquely determined up to translations, we simply choose the origin as the reference point. For a given face normal $\bm{\nu}$ and offset $\alpha \geq 0$, denote by $h(\bm{\nu},\alpha)$ the inward half space formed by the corresponding plane, i.e.
\begin{align*}
  h(\bm{\nu},\alpha) = \{ \bm{x} \in \RR^3: \bm{x}\cdot\bm{\nu} \leq \alpha \}.
\end{align*}
For a fixed set of face normals $\bm{V}=(\bm{\nu}_1,\cdots,\bm{\nu}_k)$ and to-be-determined offsets $\bm{\alpha}=(\alpha_1,\cdots,\alpha_k)$, denote by $a_j(\bm{V},\bm{\alpha})$ the face area of the $j$-th facet of the unique polyhedron formed by the intersection of the half spaces $h(\bm{\nu}_j,\alpha_j),j=1, \cdots,k$. We then proceed as follows.

\noindent {\bf Step 2: Recover face offsets and build the polyhedron}

\begin{enumerate}
\item Determine the face offsets $\bm{\alpha}$ by least-square fitting, i.e.
  \begin{align}
    \label{eq:18}
    \bm{\alpha}={\rm argmin}_{\alpha_1 \geq 0, \cdots, \alpha_k \geq 0}\, \sum_{j=1}^k \left| a_j(\bm{V},\bm{\alpha}) - A_j \right|^2.
  \end{align}
\item Reconstruct the polyhedron as the intersection of the half spaces $h(\bm{\nu}_j,\alpha_j),j=1,\cdots,k$.
\end{enumerate}

The last step of the recovery scheme is to determine the location of the polyhedron. The locating scheme we shall use is a special case of that proposed in \cite{LLSS}. We first introduce the space of $L^2$ tangential fields on the unit sphere
\begin{align*}
  T^2(\Sphere^2) = \{\bm{a} \in L^2(\Sphere^2)^3: \bm{a} \cdot \hat{\bm{x}} = 0 \ \mbox{for a.e.}\ \hat{\bm{x}} \in \Sphere^2\}
\end{align*}
endowed with the inner product $\langle \bm{u}, \bm{v} \rangle_{T^2(\Sphere^2)} = \int_{\Sphere^2} \bm{u} \cdot \bar{\bm{v}} \, {\rm d}s$, and the set of vectorial spherical harmonics
\begin{align*}
  U_n^m(\hat{\bm{x}}) = \frac{1}{\sqrt{n(n+1)}} {\rm Grad}\, Y_n^m(\hat{\bm{x}}), \quad V_n^m(\hat{\bm{x}}) = \hat{\bm{x}} \times U_n^m(\hat{\bm{x}}), \quad \hat{\bm{x}} \in \Sphere^2
\end{align*}
for $ n \in \mathbb{N}$ and $m=-n \cdots,n$. Here $Y_n^m$ denotes the usual spherical harmonic of degree $n$ and order $m$, and ${\rm Grad}$ denotes the surface gradient operator on $\Sphere^2$. It is known that the set of sphereical harmonics form a complete basis for the vector space $T^2(\Sphere^2)$.

We next introduce the indictor function
\begin{align}
  I(\bm{z}) = \frac{1}{\|\bm{E}^\infty(\hat{\bm{x}})\|^2_{T^2(\Sphere^2)}} \sum_{|m| \leq 1} 
  & \left( \left| \left\langle \bm{E}^\infty(\hat{\bm{x}}), e^{ik(\bm{d}-\hat{\bm{x}}) \cdot \bm{z}} U_1^m(\hat{\bm{x}}) \right\rangle_{T^2(\Sphere^2)} \right|^2 \right. \notag \\
& \left. + \left| \left\langle \bm{E}^\infty(\hat{\bm{x}}), e^{ik(\bm{d}-\hat{\bm{x}}) \cdot \bm{z}} V_1^m(\hat{\bm{x}}) \right\rangle_{T^2(\Sphere^2)} \right|^2 \right), \quad \bm{z} \in \RR^3.   \label{eq:9}
\end{align}
According to Theorem 2.1 in \cite{LLSS}, we can deduce that a fixed point contained in $D$ is an approximate local minimum of $I(\bm{z})$ if $k |D| \ll 1$. Using this result, we propose the following scheme for the recovery of the location of $D$:

\noindent {\bf Step 3: Recover the location}

\begin{enumerate}
\item Send an incident wave of the form \eqref{eq:planewave} with a fixed polarization $\bm{p}$, propagation direction $\bm{d}$ and wavenumber $k$ such that $k|D| \ll 1$. Collect the far-field data $\bm{E}^\infty(\hat{\bm{x}}), \hat{\bm{x}} \in \Sphere^2$.
\item Determine the location of $D$ as a minimizer of the indicator function \eqref{eq:9} in a prescribed sample region $S$. Intuition and numerical experiments indicate the minimizer is unique if $k|S|$ is sufficiently small. 
\end{enumerate}

\section{Numerical Experiments}
In this section, we conduct numerical experiments to verify the proposed recovery scheme Steps 1--3 in Section \ref{sec:scheme}.

We first consider the simplest polyhedron, i.e. a tetrahedron to demonstrate how the recovery scheme works. Let $D$ be the tetrahedron with vertex coordinates given in Table \ref{tab:tetrahedron-vertex} and face-vertex adjacense relation defined in Table \ref{tab:tetrahedron-faces}, which defines a regular tetrahedron with unit side length and center of gravity at the origin (cf. Figure \ref{fig:tetrahedron}). The tetrahedron has four vertices and four triangular faces. We then translate the tetrahedron so that its center of gravity moves to $(50,50,50)$. Note that the location of the tetrahedron plays no role in step 1--2 of the recovery scheme since they use only the norm of the far-field data, which is translation invariant for plane wave incidence.
\begin{table}[t]
  \centering
  \begin{tabular}{ccccc}
    \toprule
    Vertices & $P_1$ & $P_2$ & $P_3$ & $P_4$ \tabularnewline
                            \midrule
    Coordinates & $\left( \frac{1}{2},0,-\frac{1}{\sqrt{8}} \right)$ & $\left( -\frac{1}{2},0,-\frac{1}{\sqrt{8}} \right)$ & $\left( 0,\frac{1}{2},\frac{1}{\sqrt{8}} \right)$ & $\left( 0,-\frac{1}{2},\frac{1}{\sqrt{8}} \right)$ \tabularnewline
\bottomrule               
  \end{tabular}
\caption{Vertex coordinates of the tetrahedron (centered at origion) to be recovered (cf. Figure \ref{fig:tetrahedron}).}
\label{tab:tetrahedron-vertex}
\end{table}
\begin{table}[t]
  \centering
  \begin{tabular}{ccccc}
    \toprule
    Faces & $C_1$ & $C_2$ & $C_3$ & $C_4$ \tabularnewline
                            \midrule
    Vertices & $P_2P_4P_3$ & $P_1P_3P_4$ & $P_1P_4P_2$ & $P_1P_2P_3$ \tabularnewline
\bottomrule               
  \end{tabular}
\caption{Face-vertex adjacense relation for the tetrahedron to be recovered (cf. Table \ref{tab:tetrahedron-vertex}).}
\label{tab:tetrahedron-faces}
\end{table}
\begin{figure}[t]
  \centering
  \includegraphics[width=0.5\textwidth]{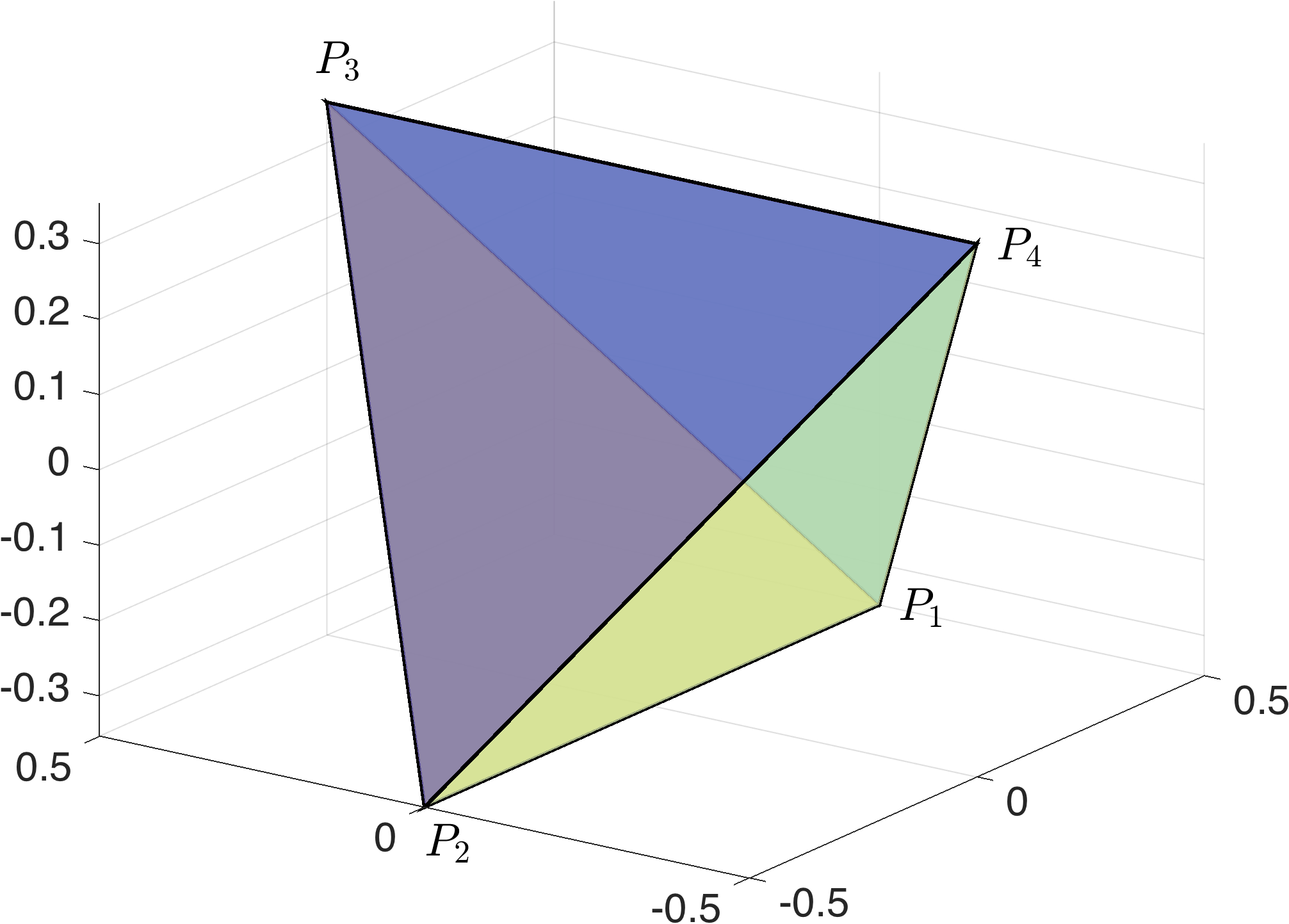}
  \caption{The true tetrahedron to be recovered (cf. Table \ref{tab:tetrahedron-vertex} for vertex coordinates).}
  \label{fig:tetrahedron}
\end{figure}

We first recover the face normals and areas according to the substeps described in Step 1.

\noindent {\bf Step 1: Recover face normals $\bm{\nu}_j$ and face areas $A_j$}
\begin{enumerate}
\item We choose the set of propagation directions $\bm{d}_n$ and polarization vectors $\bm{p}_n$, $n=1,\cdots,6$ listed in Table \ref{tab:incident} for the incident field so that the union of the corresponding front-view faces covers $\partial D$.
  \begin{table}[t]
    \centering
    \begin{tabular}{ccccccc}
      \toprule
      $\bm{d}_n$ & $(1,0,0)$ & $(-1,0,0)$ & $(0,1,0)$ & $(0,-1,0)$ & $(0,0,1)$ & $(0,0,-1)$ \tabularnewline \midrule
     $\bm{p}_n$ & $(0,0,1)$ & $(0,0,1)$ & $(0,0,1)$ & $(0,0,1)$ & $(1,0,0)$ & $(1,0,0)$ \tabularnewline
\bottomrule
    \end{tabular}
    \caption{Propagation directions and polarization vectors for the incident field}
    \label{tab:incident}
  \end{table}
Set the wavelength to be $\lambda=0.5$ (wavenumber $k=4\pi$).
\item Apply the distmesh program \cite{PerssonStrang04} to generate a uniform grid $\mathcal{T}$ consisting of $7518$ grid points on the unit sphere. It is simpler to use a mesh generated by a uniform division of the polar and azimuth angles, but the resulting mesh is less uniform since grid points are clustered around the two poles. Then we send each incident field listed in Table \ref{tab:incident} and collect the phaseless far-field data $\left| \bm{E}^\infty(\hat{\bm{x}}_j) \right|, \hat{\bm{x}}_j \in \mathcal{T}$. The sythetic measurement data in this paper is obtained by solving the direct scattering problem using the finite element method (implemented in COMSOL Multiphysics).
\item Find the local maxima of $\left| \bm{E}^\infty(\hat{\bm{x}}) \right|, \hat{\bm{x}} \in \Sphere^2$. To give an intuitive idea about the distribution of local maxima, we present in Figure \ref{fig:normEfar} a 3D polar plot of $\left| \bm{E}^\infty(\hat{\bm{x}}) \right|, \hat{\bm{x}} \in \Sphere^2$ corresponding to the incident direction $\bm{d}_1=(1,0,0)$. 
  \begin{figure}[t]
    \centering
    \includegraphics[width=0.5\textwidth]{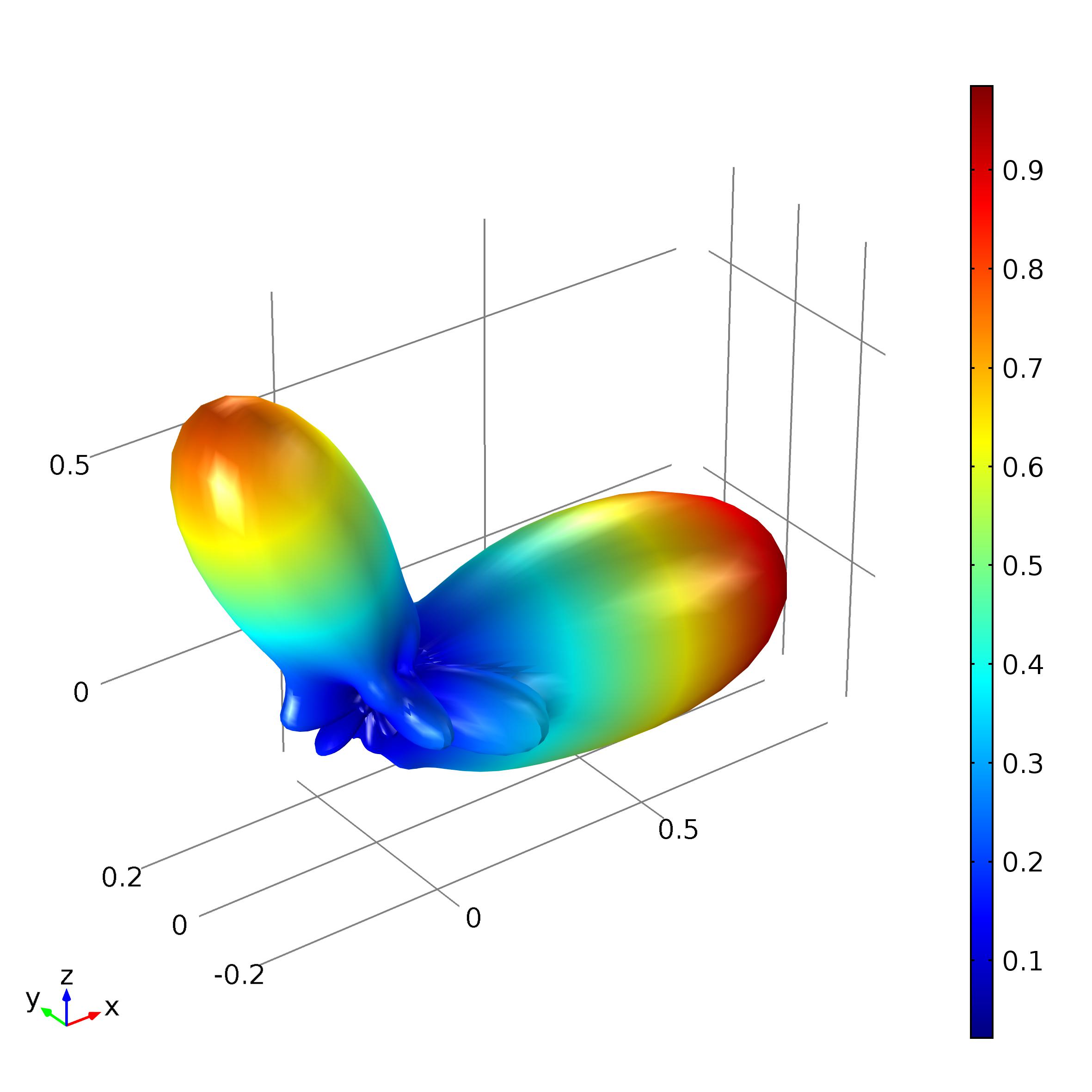}
    \caption{3D polar plot of $\left| \bm{E}^\infty(\hat{\bm{x}}) \right|, \hat{\bm{x}} \in \Sphere^2$ for the tetrahedron and incident direction $\bm{d}_1$.}
    \label{fig:normEfar}
  \end{figure}
Clearly seen in the plot are two major local maxima, one in the incident direction $\bm{d}$ and another in the critical observation direction $\hat{\bm{x}}_1$ corresponds to the only front-view face $C_1$. Besides the two larger maxima, there also exist other local maixma of smaller values. These minor local maxima are not associated to any critical observation directions and hence not considered as critical observation directions. Numerical experience tells the contrast between the major and minor local maxima increases as the wavenumber increases. In view of \eqref{eq:13}, it is possible to obtain a threshold $\bm{E}^\infty_{\rm tol}$ automatically using the a priori parameters so that all local maxima $\hat{\bm{x}}_j$ with $\bm{E}^\infty(\hat{\bm{x}}_j) < \bm{E}^\infty_{\rm tol}$ can be excluded from the critical observation directions. In this particular experiment we take $\bm{E}^\infty_{\rm tol} = 0.5$. 

To find the local maxima, we proceed to Steps (3a) and (3b) as described in section \ref{sec:scheme}.

\begin{enumerate}
\item Choose a cut-off frequency $n_c=10$ and find the Fourier coefficients $c_n^m, n=0,\cdots,n_c, m=-n,\cdots,n$. We approximate the integral in \eqref{eq:15} using the grid values $|\bm{E}^\infty(\hat{\bm{x}}_l)| Y_n^m(\hat{\bm{x}}_l), \hat{\bm{x}}_l \in \mathcal{T}$ on each triangle in the mesh $\mathcal{T}$ and compute $c_n^m$ as
\begin{align}
  \label{eq:19}
  c_n^m \approx \sum_{T \in \mathcal{T}} \frac{1}{3} \sum_{i=1}^3 |\bm{E}^\infty(\hat{\bm{x}}_i)| Y_n^m(\hat{\bm{x}}_i) |C_i|, 
\end{align}
where the first sum runs through all triangles $T$ in the triangulation $\mathcal{T}$ and and the second sum runs through the three vertices $\hat{\bm{x}}_j$ of $T$.
\item Define the objective function $f(\theta,\phi)=f(\hat{\bm{x}})$ as in \eqref{eq:17} and find its local maxima using initial guesses at the grid points on a $5 \times 11$ uniform mesh of $[0,\pi] \times [0,2\pi]$. Delete those maxima $\hat{\bm{x}}_j$ such that $|\bm{E}^\infty(\hat{\bm{x}}_j)| < \bm{E}^\infty_{\rm tol}$.
\end{enumerate}
\begin{rem}
 Steps 3(a) and 3(b) are the most time-consuming part of the scheme and may be accelerated by using the fast spherical harmonic transform algorithms (cf. \cite{KunisPotts03}).
\end{rem}
Once the critical observation directions $\hat{\bm{x}}_j$ are obtained, we compute the corresponding face normals $\bm{\nu}_j$ using \eqref{eq:cd3}. Table \ref{tab:normal-vectors} lists the critical observation directions and correponding maximal values for each incident direction $\bm{d}_n,n=1,\cdots,6$.
\begin{table}[t]
  \centering
  \begin{tabular}{ccc}
    \toprule
    $n$ & $\bm{\nu}_j$ & $|\bm{E}^\infty(\hat{\bm{x}}_j)|$ \tabularnewline
\midrule
                         $1$ & $(-0.85, 0.00, +0.53)$ & 0.80 \tabularnewline
                         $2$ & $(+0.85, 0.00, +0.53)$ & 0.80 \tabularnewline
                         $3$ & $(0.00, -0.85, -0.53)$ & 0.80 \tabularnewline
                         $4$ & $(0.00, +0.85, -0.53)$ & 0.80 \tabularnewline
                         $5$ & $(0.00, +0.75, -0.66)$ & 0.63 \tabularnewline
                         $5$ & $(0.00, -0.75, -0.66)$ & 0.63 \tabularnewline
                         $6$ & $(-0.82, 0.00, +0.57)$ & 0.59 \tabularnewline
                         $6$ & $(+0.82, 0.00, +0.57)$ & 0.59 \tabularnewline 
\bottomrule
  \end{tabular}
  \caption{Recoverd face normals $\bm{\nu}_j$ and corresponding maximal values $|\bm{E}^\infty(\hat{\bm{x}}_j)$ for each incident direction $\bm{d}_n, n=1,\cdots,6$}
  \label{tab:normal-vectors}
\end{table}
\item Due to the overlapping of front-view faces for different incident field, some of the face normals in Table \ref{tab:normal-vectors} are from the same face. For example, the face $C_1$ is a front-view face for both incident direction $\bm{d}_1$ and $\bm{d}_6$ (rf. Figure \ref{fig:tetrahedron}). Investigating Table \ref{tab:normal-vectors} we see both the first and the seventh row correspond the face $C_1$. Since the maximal value in the first row is larger than that in the seventh row, we choose the normal vector in the first row as the {\em effective normal vector}. In the computer algorithm we first identify the cluster of face normals that are close to each other (up to a threshold) and choose the one with the greatest maximal value as the effective normal vector. Table \ref{tab:effective} lists the effective face resulted from Table \ref{tab:normal-vectors}, and the face normals of the true polyheron as a comparsion.
\begin{table}[t]
  \centering
  \begin{tabular}{ccc}
    \toprule
    $n$ & $\bm{\nu}_j$ (recovered) & $\bm{\nu}_j$ (true) \tabularnewline
\midrule
                         $1$ & $(-0.85, -0.00, +0.53)$ & $(-0.82, 0.00, +0.58)$ \tabularnewline
                         $2$ & $(+0.85, 0.00, +0.53)$ & $(+0.82, 0.00, +0.58)$ \tabularnewline
                         $3$ & $(0.00, -0.85, -0.53)$ & $(0.00, -0.82, -0.58)$ \tabularnewline
                         $4$ & $(0.00, +0.85, -0.53)$ & $(0.00, +0.82, -0.58)$ \tabularnewline
\bottomrule
  \end{tabular}
  \caption{Recoverd effective face normals $\bm{\nu}_j$ and true face normals for the tetrahedron}
  \label{tab:effective}
\end{table}
\item Once the face normals are obtained, we compute the face areas $A_j$ using \eqref{eq:13}. The recovered and true face areas are listed in Table \ref{tab:area}.
\begin{table}[t]
  \centering
  \begin{tabular}{ccccc}
    \toprule
    \ & $A_1$ & $A_2$ & $A_3$ & $A_4$ \tabularnewline
\midrule
Recovered & 0.47 & 0.47 & 0.47 & 0.47 \tabularnewline
True & 0.43 & 0.43 & 0.43 & 0.43 \tabularnewline
\bottomrule
  \end{tabular}
  \caption{Recovered and true face areas for the tetrahedron.}
  \label{tab:area}
\end{table}
\end{enumerate}

Now the face normals and areas are recovered, we proceed to Step 2 to build the polyhedron.

\noindent {\bf Step 2: Build the polyhedron}

\begin{enumerate}
\item Recover the face offsets $\bm{\alpha}$ using \eqref{eq:18}. To this end, we need to construct the polyhedron for a given set of face normals $\bm{V}$ and face offsets $\bm{\alpha}$, and compute the face areas $a_j(\bm{V},\bm{\alpha})$. We implement this with the Qhull program \cite{BarberDobkinHuhdanpaa96}. Table \ref{tab:offsets} lists the resulted offsets after running the lsqnonlin program in MATLAB using default settings and initial guess $\bm{\alpha}=(1,1,1,1)$. 
  \begin{table}[t]
    \centering
    \begin{tabular}{ccccc}
\toprule
      \ & $\alpha_1$ & $\alpha_2$ & $\alpha_3$ & $\alpha_4$ \tabularnewline
\midrule
     Recovered & $0.21$ & $0.21$ & $0.21$ & $0.21$ \tabularnewline
     True & $0.20$ & $0.20$ & $0.20$ & $0.20$ \tabularnewline                                        
\bottomrule
    \end{tabular}
    \caption{Recovered and true face offsets for the tetrahedron}
    \label{tab:offsets}
  \end{table}
\item Once the face normals and offsets are recovered, we obtain the tetrahedron as the half space intersection of the face planes. We may also obtain the coordinates of the vertices using the Qhull program. Table \ref{tab:coordinates} lists the vertex coordinates for the recovered tetrahedron, as well as true tetrahedron for comparison. 
\begin{table}[t]
  \centering
  \begin{tabular}{ccc}
\toprule
    \ & Recovered & True \tabularnewline
\midrule
$P_1$ & $(+0.50, 0.00, -0.40)$ & $(+0.50, 0.00, -0.35)$ \tabularnewline
$P_2$ & $(-0.50, 0.00, -0.40)$ & $(-0.50, 0.00, -0.35)$ \tabularnewline
$P_3$ & $(0.00, +0.50, +0.40)$ & $(0.00, +0.50, +0.35)$ \tabularnewline
$P_4$ & $(0.00, -0.50, +0.40)$ & $(0.00, -0.50, +0.35)$ \tabularnewline
\bottomrule
  \end{tabular}
  \caption{Vertex coordinates for the recovered and true tetrahedron (centered at the origin).}
  \label{tab:coordinates}
\end{table}
\end{enumerate}

At this stage we have recovered the shape of the tetrahedron. Now we proceed to the recovery of the location of the tetrahedron. 

\noindent {\bf Step 3: Recover the location}

\begin{enumerate}
\item Send an incident wave of the form \eqref{eq:planewave} with a fixed polarization $\bm{p}=(0,0,1)$, propagation direction $\bm{d}=(1,0,0)$ and wavelength $\lambda=50$ (wavenumber $k=\pi/25$). Collect the far-field data $\bm{E}^\infty(\hat{\bm{x}}_j), \hat{\bm{x}}_j \in \mathcal{T}$, where $\mathcal{T}$ is a uniform grid of $1878$ points on $\Sphere^2$.
\item Determine the location of the polyhedron as a minimizer of the indicator function \eqref{eq:9} in the prescribed sample region $S=[0,100] \times [-100,100] \times [-100,100]$. The inner products in \eqref{eq:9} are computed in the same manner as in \eqref{eq:19}. Table \ref{tab:location} lists the result after using the fmincon program in MATLAB with the initial guess $(0,0,0)$. We observe the recovered location matches exactly with the true location up to the second digit.
\begin{table}[t]
  \centering
  \begin{tabular}{cc}
\toprule
   \ & Location \tabularnewline
\midrule
Recovered & $(50.00, 50.00, 50.00)$ \tabularnewline
True & $(50.00,50.00,50.00)$ \tabularnewline
\bottomrule
  \end{tabular}
  \caption{Vertex coordinates for the recovered and true tetrahedron (centered at the origin).}
  \label{tab:location}
\end{table}
\end{enumerate}

Once the location $\bm{z}$ is recovered, we simply translate the polyhedron recovered in step 1--2 so that its center of gravity moves to $\bm{x}=\bm{z}$. The final result is shown in Figure \ref{fig:tetrahedron_results} (B) (as a comparsion, the true polyhedron is shown in Figure \ref{fig:tetrahedron_results} (A)).
\begin{figure}[t]
  \centering
  \begin{subfigure}[b]{0.45\textwidth}
    \includegraphics[width=\textwidth]{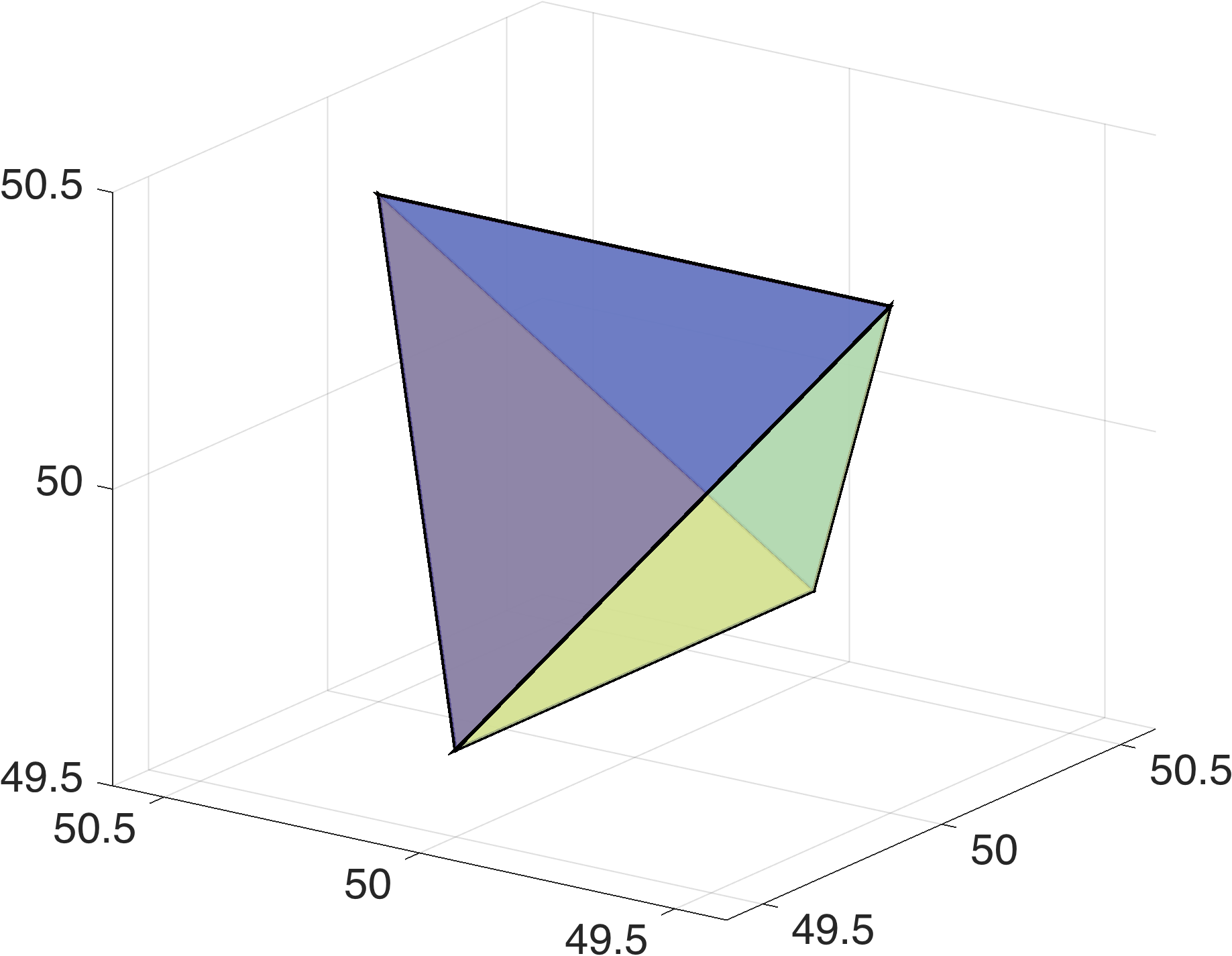}
    \caption{}
  \end{subfigure}
  \hfill
  \begin{subfigure}[b]{0.45\textwidth}
    \includegraphics[width=\textwidth]{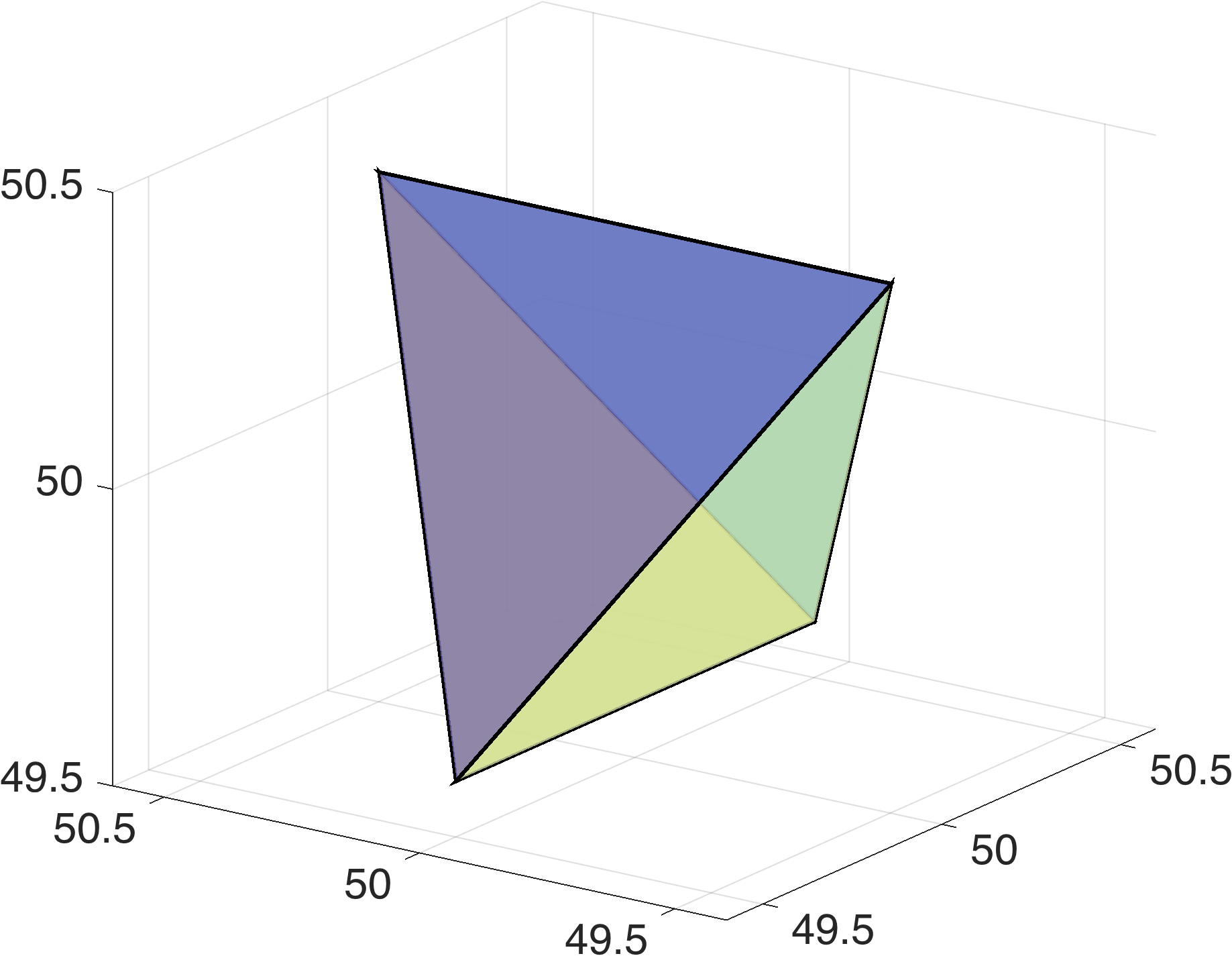}
    \caption{}
  \end{subfigure}
\begin{subfigure}[b]{0.45\textwidth}
    \includegraphics[width=\textwidth]{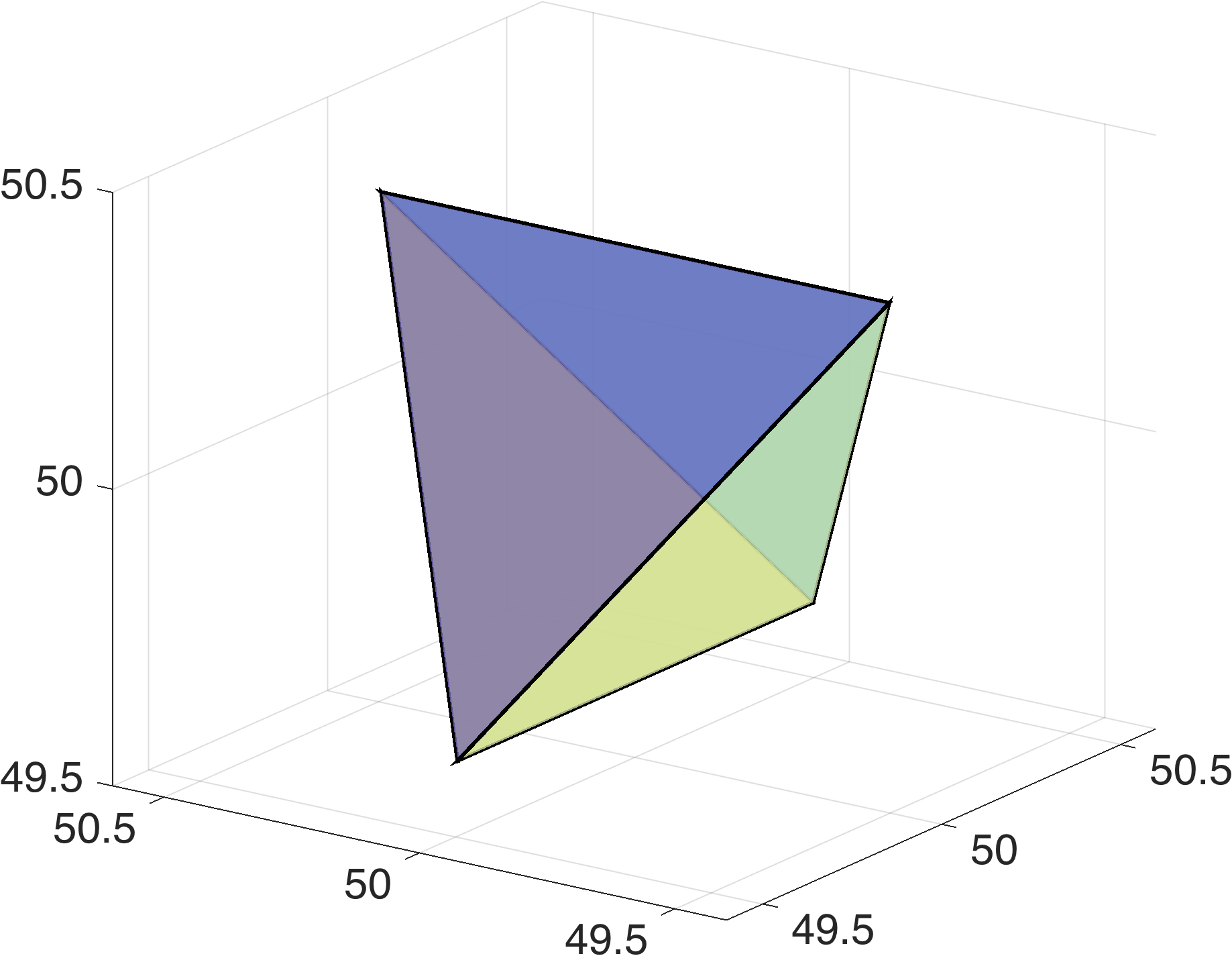}
    \caption{}
  \end{subfigure}
  \hfill
\begin{subfigure}[b]{0.45\textwidth}
    \includegraphics[width=\textwidth]{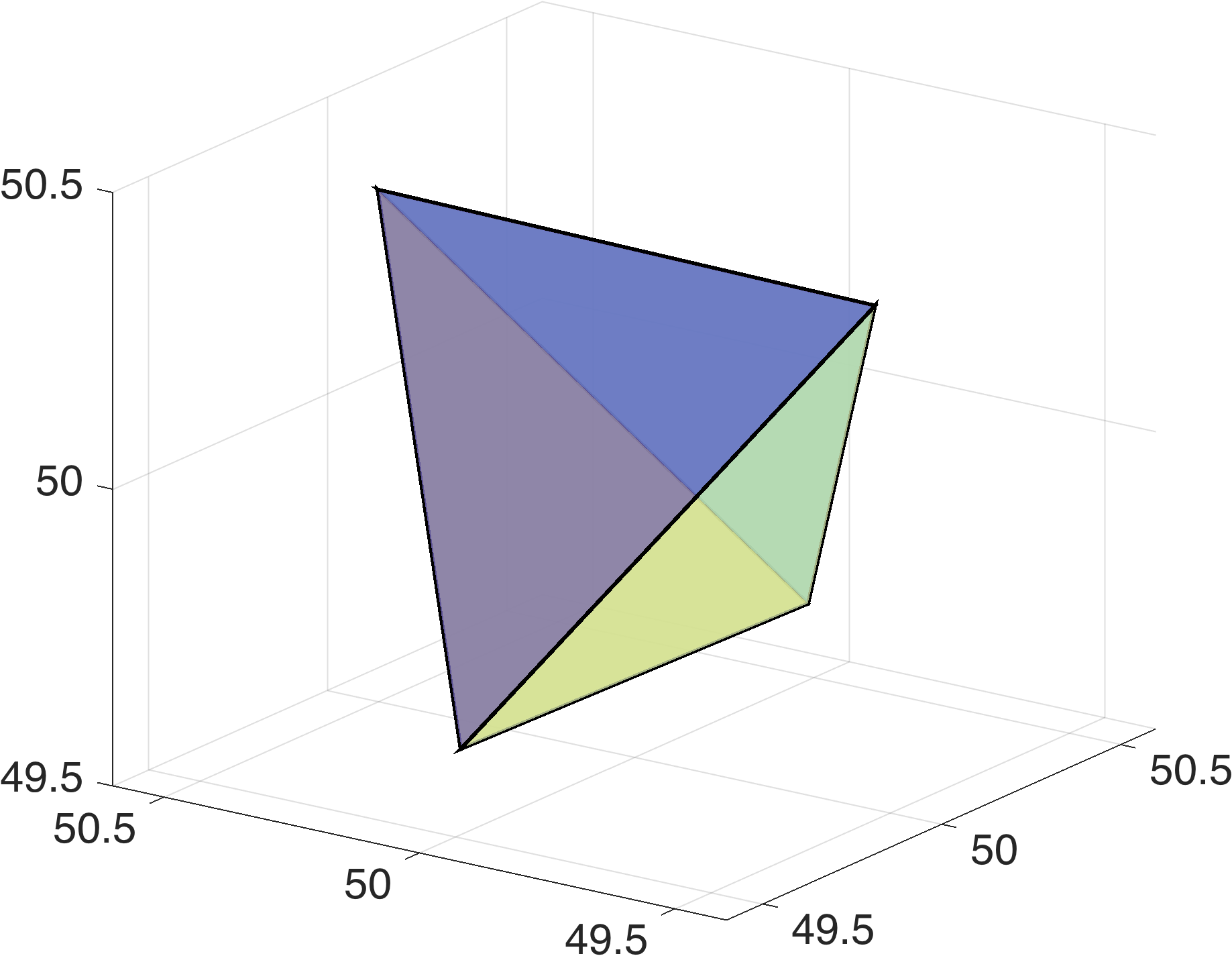}
    \caption{}
  \end{subfigure}
  \caption{Reconstruction results for the tetrahedron with noiseless measurement data. (A) true polyhedron; (B) recovered polyhedron with incident fields of wavelength $\lambda=0.5$ and noiseless measurements; (C) recovered polyhedron with incident fields of wavelength $\lambda=0.3$ and noiseless measurements; (D) recovered polyhedron with incident fields of wavelength $\lambda=0.3$ and noisy measurements (relative noise level $100\%$).}
  \label{fig:tetrahedron_results}
\end{figure}

Since the physical optics approximation and the approximations used in Theorem \ref{thm:main} become better as the wavenumber $k$ increases, we expect to obtain more accurate recovery in Steps 1--2 if we use incident fields of smaller wavelength. Figure \ref{fig:tetrahedron_results} (C) shows the recovered polyhedron when the incident fields used in step 1--2 have wavelength $\lambda=0.3$. Clearly the recovery is more accurate as compared with Figure \ref{fig:tetrahedron_results} (B), where the incident wavelength is $\lambda=0.5$. 

So far we assumed the measurement data is noiseless. In practice the measurement is never exact but contaminated with noise. To test the robustness of the scheme in terms of noise, we consider the following measurement data
\begin{align*}
  \left| \bm{E}^\infty_\delta(\hat{\bm{x}}_j) \right| = \left| \bm{E}^\infty(\hat{\bm{x}}_j) \right| (1 + \delta r_j)
\end{align*}
for Steps 1--2, where $\delta>0$ is the relative noise level and $r_j$ is a random number generated from the standardard normal distribution. The noisy measurement data for Step 3 is synthesized in a similar manner. Figure \ref{fig:tetrahedron_results} (D) shows the recovered polyhedron when the incident wavelength is $\lambda=0.3$ and the measurement data is contaminated with a noise of $100\%$ relative level. Comparing with Figure \ref{fig:tetrahedron_results} (C) we see the scheme is extremely robust in terms of measurement noise. This is due to the low-pass filter used in Step 1(3) and the inner product in the indicator function \eqref{eq:9} used in Step 3.

Next we test our scheme with more complicated polyhedra. We shall omit all the details and only show the final results. Figure \ref{fig:prism_results} shows the recovery results of a prism with three unit square faces and three equilateral triangular faces with $\lambda=0.5$ and noiseless measurements. The recovered prism is almost indistinguishable from the true prism.

Finally we consider the cuboctahedron shown in Figure \ref{fig:cuboctahedron_results} (A), which is obtained by truncating the eight corners of the unit cube at the middle points of the edges and rotating $45$ degress along the $z$ axis. The cuboctahedron consists of $12$ vertices and $14$ faces. The faces are smaller than the polyhedra considered previously, hence a smaller wavelength is needed to obtain satisfactory results. Figure \ref{fig:cuboctahedron_results} (B) shows the recovered polyhedron with $\lambda=0.3$ and noiseless measurements. In this example we observe a phenomenon which is inevitable in the reconstruction of polyhedra with errors, that is the change of adjacency relation. The recovered polyhedron consists of more vertices and edges than the true polyhedron. Some of the vertices in the polyhedron are splitted (marked in the red thick circle). Nevertheless the recovered polyhedron is still very close to the true one. We may also merge those splitted vertices (up to some threshold) if we wish.
\begin{figure}[t]
  \centering
  \begin{subfigure}[b]{0.45\textwidth}
    \includegraphics[width=\textwidth]{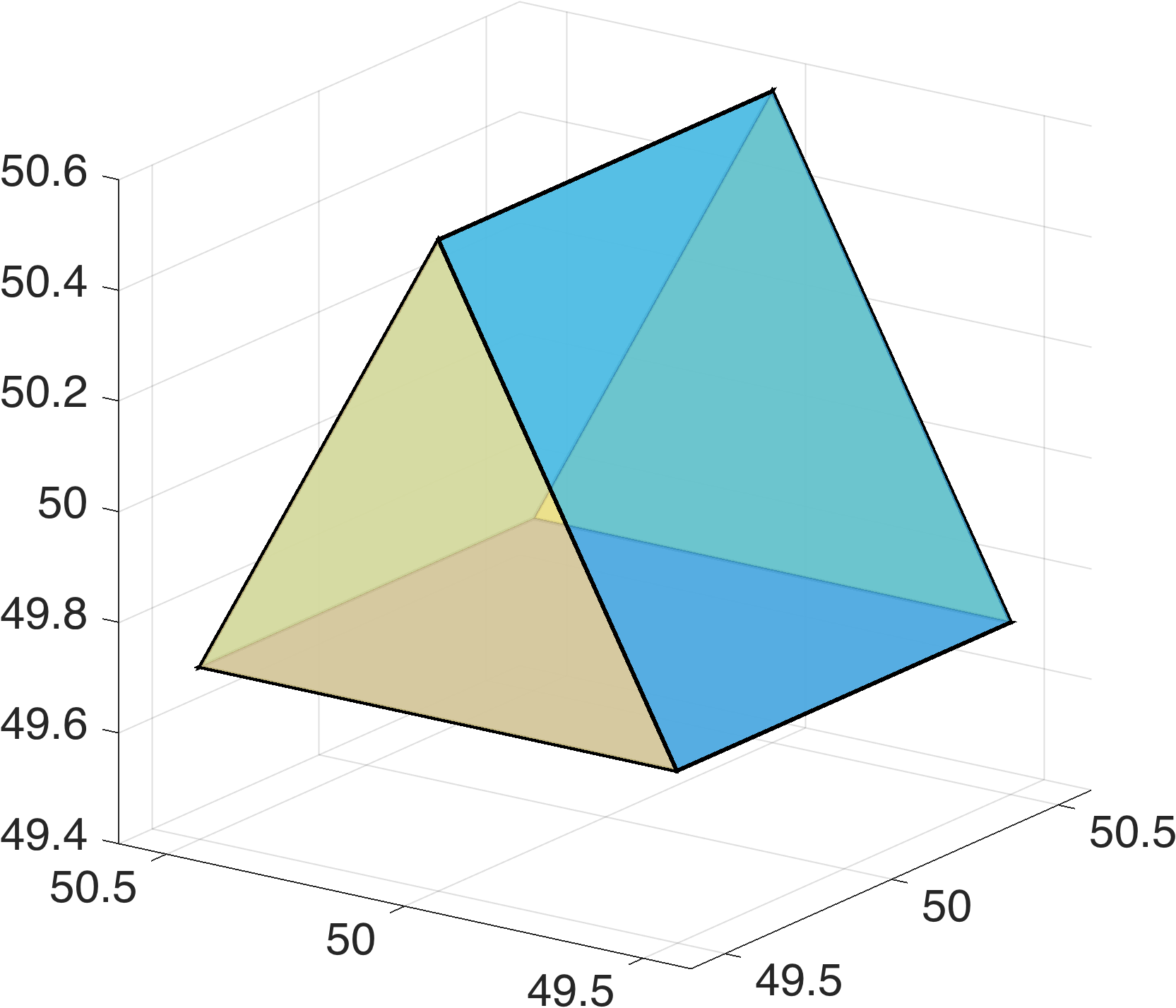}
  \end{subfigure}
  \hfill
  \begin{subfigure}[b]{0.45\textwidth}
    \includegraphics[width=\textwidth]{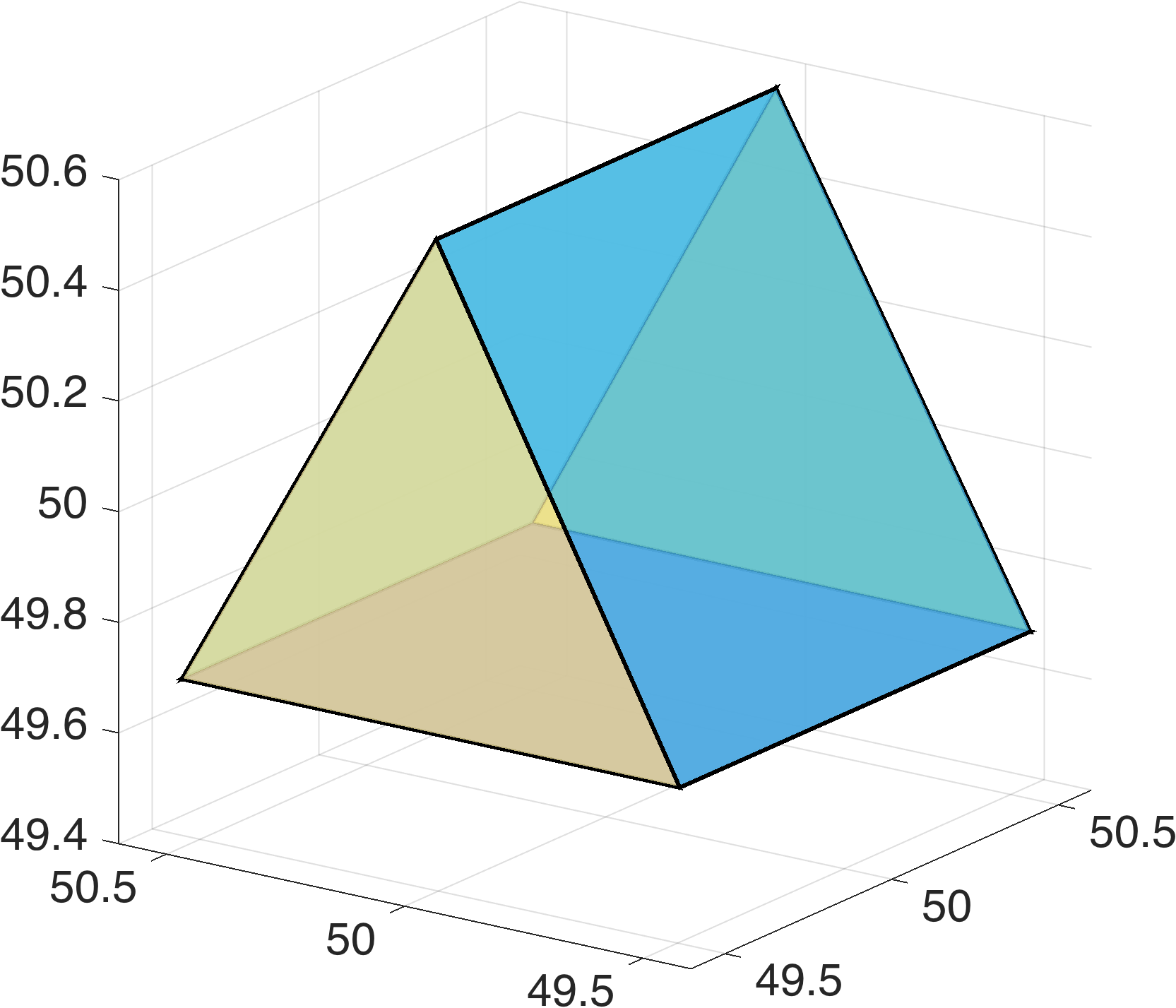}
  \end{subfigure}
  \caption{(A) The true prism; (B) the recovered prism with $\lambda=0.5$ and noiseless measurements.}
  \label{fig:prism_results}
\end{figure}
Figure \ref{fig:cube_results} shows the recovery results of a unit cube with $\lambda=0.5$ and noiseless measurements. The recoverd cube is slightly larger than the true cube.
\begin{figure}[t]
  \centering
  \begin{subfigure}[b]{0.45\textwidth}
    \includegraphics[width=\textwidth]{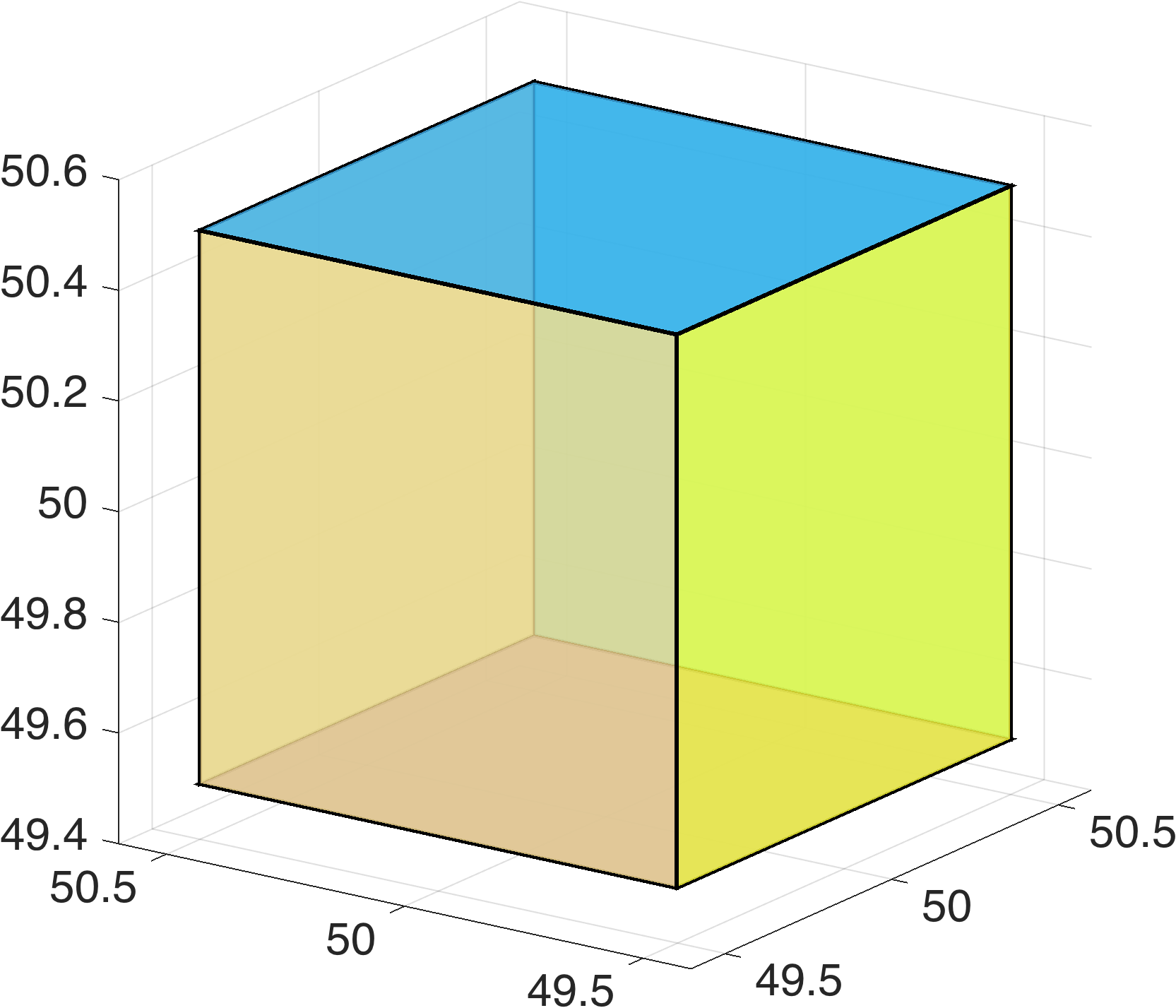}
  \end{subfigure}
  \hfill
  \begin{subfigure}[b]{0.45\textwidth}
    \includegraphics[width=\textwidth]{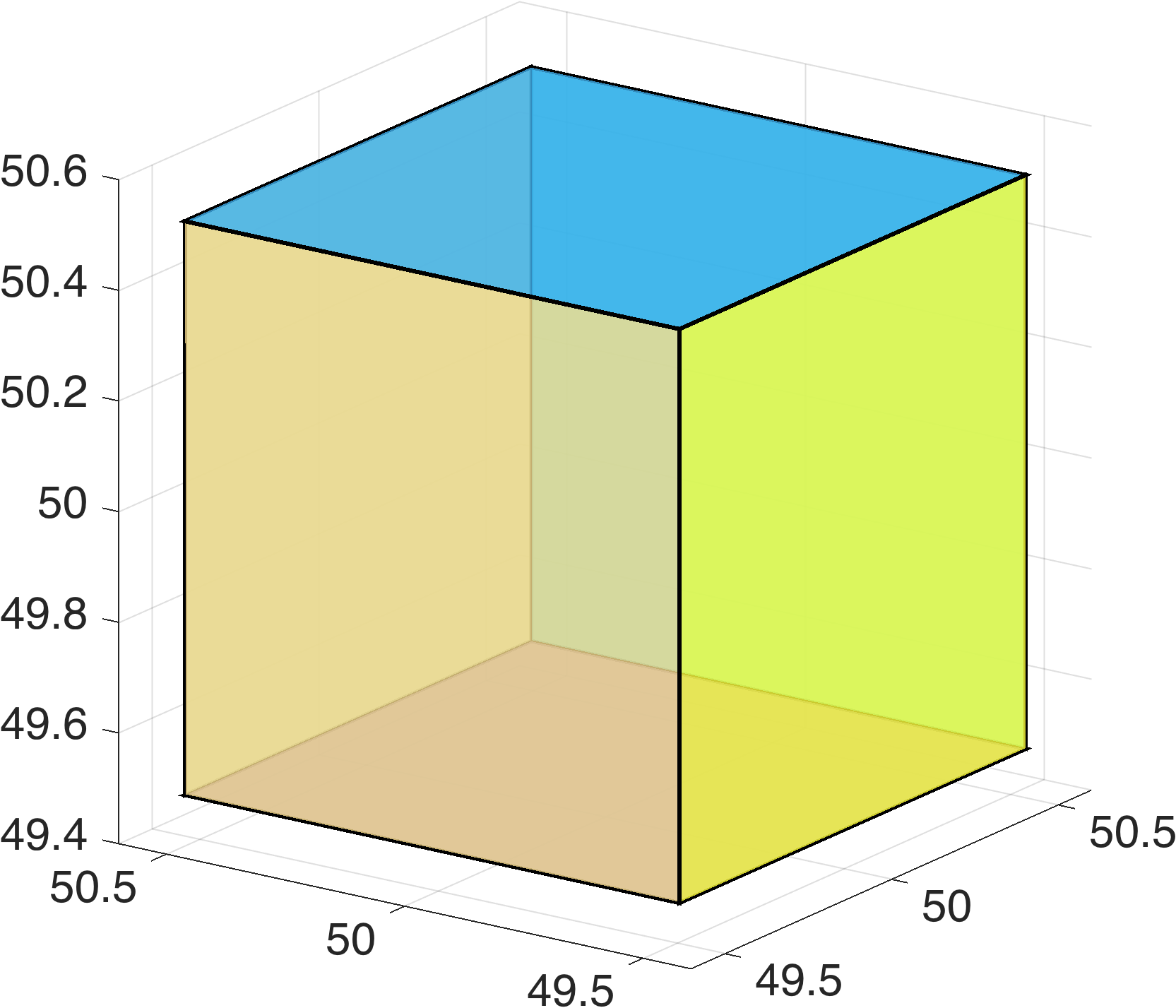}
  \end{subfigure}
  \caption{(A) The true cube; (B) the recovered cube with $\lambda=0.5$ and noiseless measurements.}
  \label{fig:cube_results}
\end{figure}
\begin{figure}[t]
  \centering
  \begin{subfigure}[b]{0.45\textwidth}
    \includegraphics[width=\textwidth]{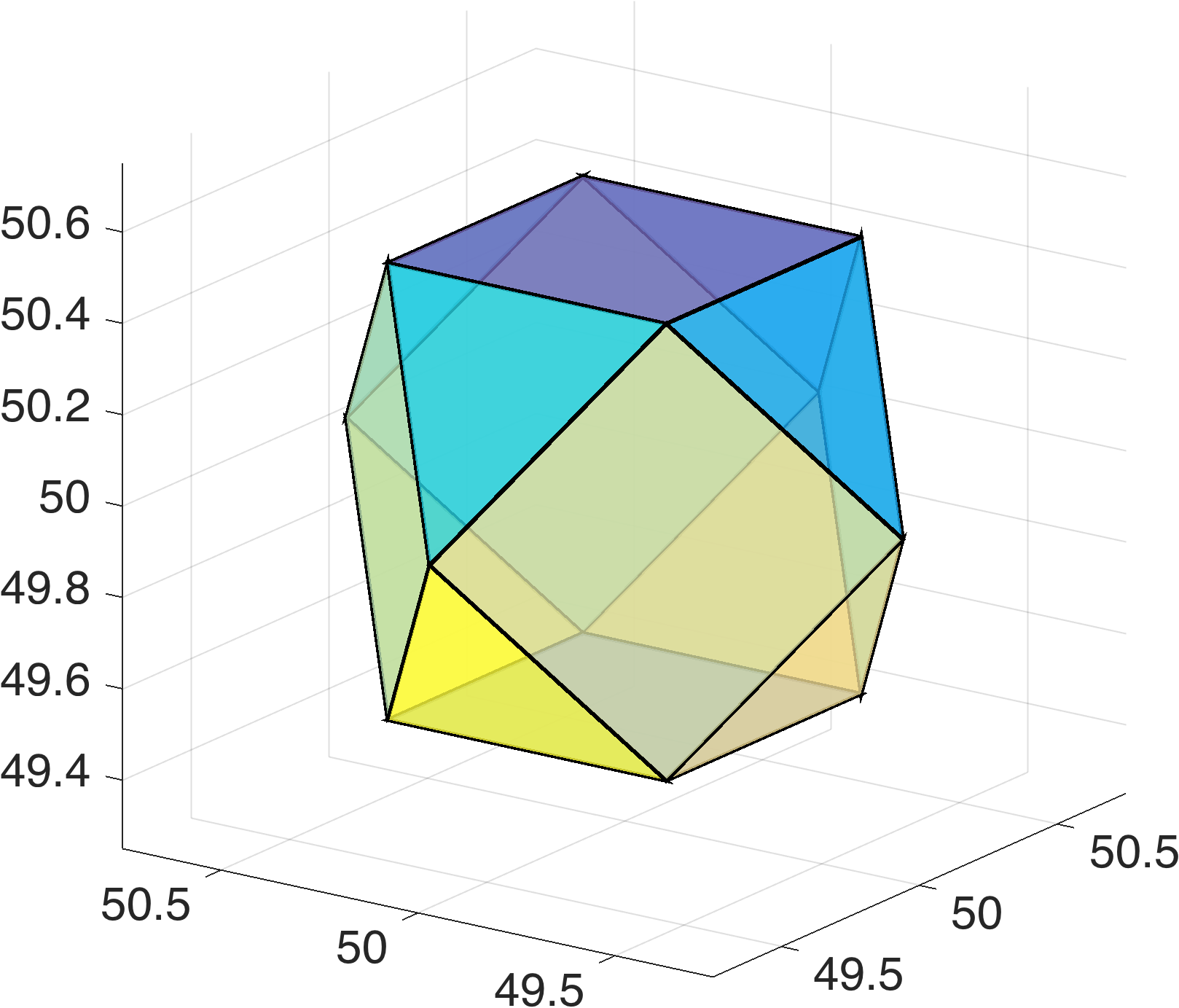}
  \end{subfigure}
  \hfill
  \begin{subfigure}[b]{0.45\textwidth}
    \includegraphics[width=\textwidth]{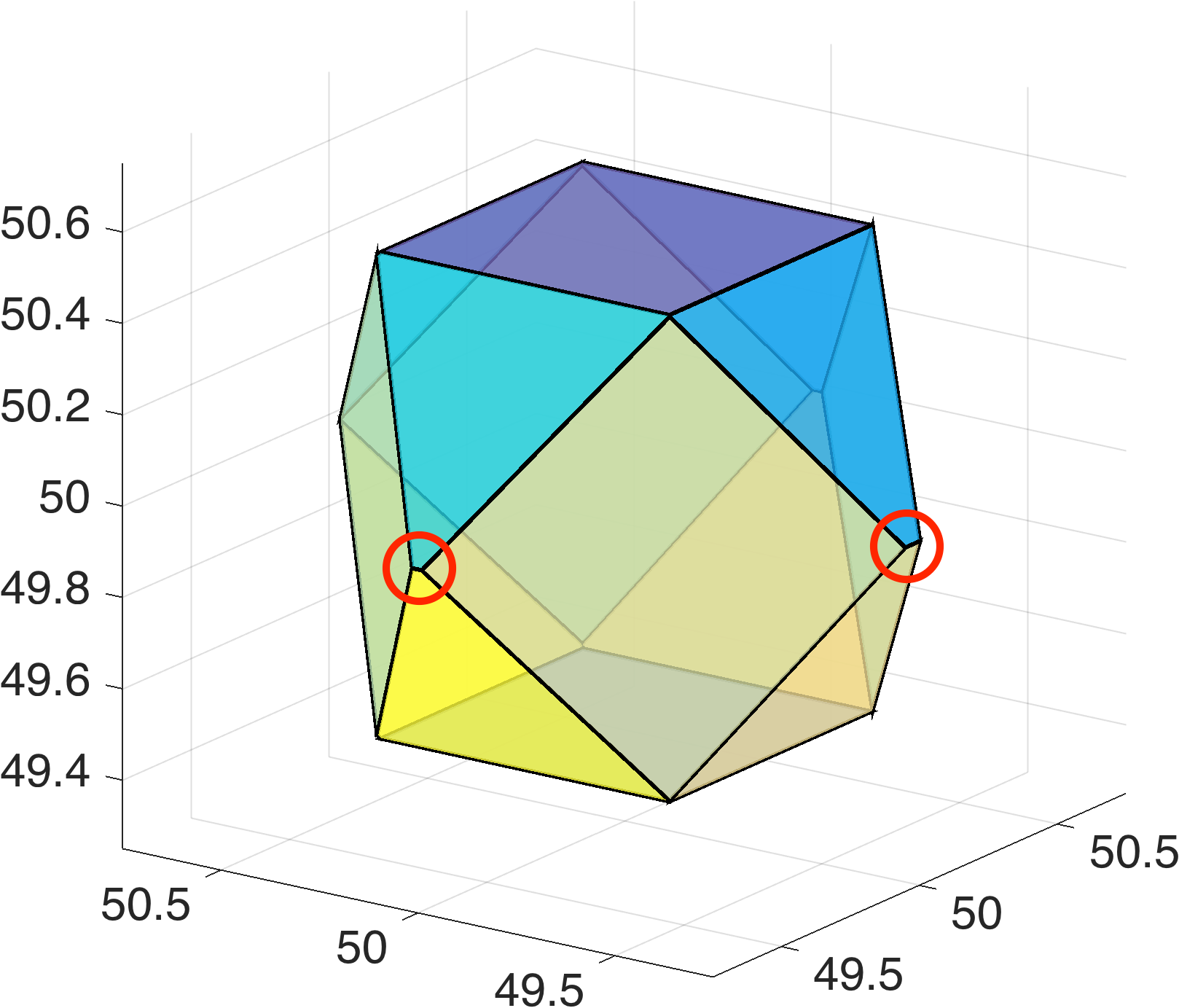}
  \end{subfigure}
  \caption{(A) the true polyhedron; (B) the recovered polyhedron with $\lambda=0.3$ and noiseless measurements.}
  \label{fig:cuboctahedron_results}
\end{figure}

\section{Conclusion}
We developed a novel scheme for solving an inverse electromagnetic scattering problem of recovering a convex polyhedron with a few phaseless and backscattering far-field measurements. The scheme consists of three major steps. The first step is to determine the face normals and face areas of the polyhedron. This is achieved by sending an incident field of a high frequency and collecting the phaseless far-field pattern, and theoretically supported by the local maxima behavior, which is proved based on the physical optics approximation. The second step is to reconstruct the polyhedron from the recovered face normals and areas. This is acccomplished by a simple least-square fitting method and an algorithm from computational geometry. The last step is to determine the location of the polyhedron by sending an incident field of a low frequency and collecting the far-field pattern. Numerical experiments show that the scheme is effective, fast and robust to measurement noise. This work is a significant extension of the recent work on 2D acoustic scattering to the much more challenging 3D electromagnetic scattering. For the future investigation, one may consider the recovery of non-convex polyhedra and inverse elastic scattering.

\clearpage


\begin{thebibliography}{99}


\bibitem{AK1} {H.~Ammari and H.~Kang}, {\it Reconstruction of Small Inhomogeneities from Boundary Measurements}, Lecture Notes in Mathematics, 1846. Springer-Verlag, Berlin Heidelberg, 2004.

\bibitem{AK2} {H. Ammari and H. Kang}, {\it Polarization and Moment Tensors}, Springer-Verlag, New York, 2007.

%\bibitem{AR} {G. Alessandrini and L. Rondi}, {\it Determining a sound-soft polyhedral scatterer by a single far-field measurement}, Proc. Amer. Math. Soc., {\bf 35} (2005), 1685--1691. Corrigendum: Preprtint arXiv math.Ap/0601406, 2006.

\bibitem{BarberDobkinHuhdanpaa96} {C. Bradford Barber, David P. Dobkin and Hannu Huhdanpaa},
{\it The Quickhull algorithm for convex hulls}, ACM Transactions on Mathematical Software, {\bf 22} (1996), 469--483.

\bibitem{CWL} {S. N. Chandler-Wilde and S. Langdon}, {\it Acoustic scattering: high frequency boundary element methods and unified transform methods}, arxiv:1410.6137

\bibitem{CK} {D. Colton and R. Kress}, {\it Inverse Acoustic and Electromagnetic Scattering Theory}, 2nd Edition, Springer-Verlag, Berlin, 1998.

%\bibitem{CS} {D. Colton and B. D. Sleeman}, {\it Uniqueness theorems for the inverse problem of acoustic scattering}, IMA J. Appl. Math., {\bf 31} (1983), 253--259.


%\bibitem{ElsYam} {J. Elschner and M. Yamamoto}, {\it Uniqueness in determining polygonal sound-hard obstacles with a single incoming wave}, Inverse Problems, {\bf 22} (2006), 355.

%\bibitem{HonNakSin} {N. Honda, G. Nakamura and M. Sini}, {\it Analytic extension and reconstruction of obstacles from few measurements for elliptic second order operators}, Math. Ann., {\bf 355} (2013), 401--427.

%\bibitem{Isa1} {V. Isakov}, {\it On uniqueness in the inverse transmission scattering problem}, Comm. Part. Diff. Eq., {\bf 15} (1990), 1565--1587.

\bibitem{HLM} {D. P. Hewett, S. Langdon, and J. M. Melenk}, {\it A high frequency
hp boundary element method for scattering by convex polygons}, SIAM
J. Numer. Anal., {\bf 51} (2013), 629--653.

\bibitem{Isa} {V. Isakov}, {\it Inverse Problems for Partial Differential Equations}, 2nd edition, Applied Mathematical Sciences, 127, Springer-Verlag, New York, 2006.

%\bibitem{Isa3} {V. Isakov}, {\it Stability estimates for obstacles in inverse scattering}, J. Comput. Appl. Math., {\bf 42} (1992), 79--88.

%\bibitem{LaxPhi} {P. D. Lax and R. S. Phillips}, {\it Scattering Theory}, Academic Press, 1967.

%\bibitem{Liu} {H. Liu}, {On recovering polyhedral scatterers with acoustic far-field measurements}, IMA J. Appl. Math., {\bf 74} (2009), 264--272.

\bibitem{Klain}
{Daniel A. Klain}, 
{\it The Minkowski problem for polytopes},
Adv. Math., {\bf 185} (2004), 270--288.

%\bibitem{Kli} {M. Klibanov}, {\it Phaseless inverse scattering problems in three dimensions}, SIAM J. Appl. Math., {\bf 74} (2014) 392--410.

\bibitem{KunisPotts03} {S. Kunis and D. Potts}, {\it Fast spherical Fourier algorithms},
J. Comput. Appl. Math., {\bf 161} (2003), 75--88.

\bibitem{KR} {R. Kress and W. Rundell}, {\it Inverse obstacle scattering with modulus of the far field pattern as data}, Inverse problems in medical imaging and nondestructive testing (Oberwolfach, 1996), 75?92, Springer, Vienna, 1997.

\bibitem{LaxPhi} {P. D. Lax and R. S. Phillips}, {\it Scattering Theory}, Academic Press, 1967.

\bibitem{LL} {J. Li and H. Liu}, {\it Recovering a polyhedral obstacle by a few backscattering measurements}, J. Differential Equations, {\bf 259} (2015), no. 5, 2101--2120. 

\bibitem{LLSS}
{ J. Li,  H. Liu,  Z. Shang and H. Sun},
{\it Two single-shot methods for locating multiple electromagnetic scatterers},
SIAM J. Appl. Math., {\bf 73} (2013), 1721--1746.

\bibitem{LLW} {J. Li, H. Liu and Q. Wang}, {\it Locating multiple multi-scale electromagnetic scatterers by a single far-field measurement}, SIAM J. Imaging Sci., {\bf 6} (2013), 2285--2309.

\bibitem{Liu} {H. Liu}, {\it A global uniqueness for formally determined inverse electromagnetic obstacle scattering}, Inverse Problems, {\bf 24} (2008), 035018.

\bibitem{LYZ} {H. Liu, M. Yamamoto and J. Zou}, {\it Reflection principle for Maxwell's equations and its application to inverse electromagnetic scattering problem}, Inverse Problems, {\bf 23} (2007), 2357--2366.

\bibitem{Maj} {A. Majda}, {\it High frequency asymptotics for the scattering matrix and the inverse problem
of acoustical scattering}. Comm. Pure Appl. Math., {\bf 29} (1976), 261--291.

%\bibitem{LLZ}
%{J. Li, H. Liu and J. Zou},
%{\it Locating multiple multiscale acoustic scatterers},
%SIAM Multiscale Model. Simul., \textbf{12} (2014), 927--952.

%\bibitem{Liu1} {H. Liu and J. Zou}, {\it Uniqueness in an inverse acoustic obstacle scattering problem for both sound-hard and sound-soft polyhedral scatterers}, Inverse Problems, {\bf 22} (2006), 515--524.
%
%%\bibitem{Liu2} {H. Liu and J. Zou}, {\it On unique determination of partially coated polyhedral scatterers with far-field measurements}, Inverse Problems, {\bf 23} (2007), 297--308.
%
%\bibitem{Kli} {M. Klibanov}, {\it Phaseless inverse scattering problems in three dimensions}, SIAM J. Appl. Math., {\bf 74} (2014), 392--410.
%
%\bibitem{Maj} {A. Majda}, {\it High frequency asymptotics for the scattering matrix and the inverse problem
%of acoustical scattering}. Comm. Pure Appl. Math., {\bf 29} (1976), 261--291.
%
\bibitem{Mcl} {W. McLean}, {\it Strongly Elliptic Systems and Boundary Integral Equations}, Cambridge University Press, 2000.

%\bibitem{MelTay} {R. B. Melrose and M. E. Taylor}, {\it Near peak scattering and the corrected Kirchhoff
%approximation for a convex obstacle}, Adv. Math., {\bf 55} (1985), 242--315.

\bibitem{MelTay} {R. B. Melrose and M. E. Taylor}, {\it Near peak scattering and the corrected Kirchhoff
approximation for a convex obstacle}, Adv. Math., {\bf 55} (1985), 242--315.


\bibitem{Ned} {J. C. Nedelec}, {\it Acoustic and Electromagnetic Equations}, Springer, New York, 2001.

\bibitem{PerssonStrang04} {P.-O. Persson and G. Strang}, {\it A Simple Mesh Generator in MATLAB},
SIAM Review, {\bf 46} (2004), 329--345.

\bibitem{Uhl} {G. Uhlmann}, {\it Inside Out: Inverse Problems and Applications}, Cambridge University Press, Cambridge, 2003.


\end{thebibliography}
\end{document}